\documentclass{article}
\title{On certain properties of the Petty space}

\author{S.K.Mercourakis  \and G.Vassiliadis}

\date{}

\usepackage{amsmath}
\usepackage{amsfonts}
\usepackage{amssymb}
\usepackage{amscd}
\usepackage{amsthm}
\usepackage{makeidx}
\usepackage{euscript}
\usepackage{enumerate}
\usepackage{graphicx}
\usepackage{amsmath}
\usepackage{amssymb}
\usepackage{graphicx}

\addtolength{\hoffset}{-1cm}
\addtolength{\textwidth}{2cm}
\theoremstyle{plain}
\newtheorem{theo}{Theorem}

\newtheorem*{Conjecture}{Conjecture}
\newtheorem{lemm}{Lemma}
\newtheorem{prop}{Proposition}
\newtheorem{cor}{Corollary}
\newtheorem{rem}{Remark}
\newtheorem{rems}{Remarks}
\theoremstyle{definition}

\newtheorem{Claim}{\underline{Claim}}
\newtheorem{example}{Example}
\newtheorem{examples}{Examples}
\newtheorem*{notation}{\underline{Notation}}

\begin{document}
\maketitle

\begin{abstract}
\footnotesize 
We study some touching properties of the three-dimensional Petty space 
$X=(\ell_2^2 \oplus \mathbb{R})_1$. In particular we give an estimation of its Hadwiger number
and also show that its equilateral subsets $A$ of maximum cardinality (i.e. $|A|=e(X)$) do not
have a center.

\footnote{\noindent 2020 \textsl{Mathematics Subject
Classification}: Primary 52C99;Secondary 46B20.\\
\textsl{Key words and phrases}: Petty space, Hadwiger number, equilateral set.}
% \PACS{PACS code1 \and PACS code2 \and more}
%\subjclass{Primary  \and ; Secondary   \and .}

\normalsize 
\end{abstract}

\section*{Introduction}
Let $(X,||\cdot||)$ be a finite-dimensional normed space. The \textit{Hadwiger number} $H(X)$ of $X$ is
the maximum number of balls of $X$ of radius $1$ that can touch the unit ball $B$ of $X$, having pairwise 
disjoint interiors (the maximum number of balls of $X$ of radius $1$ kissing the unit ball of $X$), see \cite{Be},
\cite{Bo} and \cite{BMP}, also \cite{SWAN}. The \textit{strict Hadwiger number} $H'(X)$ of $X$ is the 
maximum number of pairwise disjoint balls of radius $1$ touching $B$. 

The \textit{equilateral number} $e(X)$ of $X$ is the largest cardinaliy of an equilateral subset of $X$, i.e. a subset
$S \subseteq X$ such that all pairwise distances between distinct points of $S$ are equall, see \cite{S}. 

A subset $S$ of $X$ will be called $1$-\textit{separated}, when $||x-y|| \ge 1$  $\forall x,y \in S$ with $x \neq y$.
$S$ will be called $1^+$-\textit{separated}, when $||x-y|| >1$  $\forall x,y \in S$ with $x \neq y$. It is easy to see
that $H(X)$ is the maximum cardinality of a 1-separated subset of the unit sphere $S_X$ of $X$ (resp. $H'(X)$ is the 
maximum cardinality of a $1^+$-separated subset of $S_X$).
A point $\alpha \in X$ is a center of $S$, if $\alpha$ is equidistant
from each point of $S$, i.e. there is $r>0$ such that $S \subseteq S(\alpha,r)=\{x \in S:||x-\alpha||=r\}$.

The 3-dimensional Petty space $X=(\ell_2^2 \oplus \mathbb{R})_1$ was introduced by C.M Petty, see \cite{P}. 
Let $\alpha=(x,y,z) \in X$; expressing $\alpha$ in cylindrical coordinates we write $\alpha=(r,\theta,z)$, 
where $r=\sqrt{x^2+y^2} \ge 0$ (when $r=0 \Leftrightarrow x=y=0$, then we set $r=\theta=0$). 
The norm of $\alpha$ is given by 

\begin{equation}
\begin{aligned}
||\alpha||=r+|z|.
\end{aligned}
\end{equation}

The distance of two points $\alpha_k=(r_k,\theta_k,z_k), k=1,2$ is
\begin{equation}
\begin{aligned}
||\alpha_1-\alpha_2||=\sqrt{r^2_1+r^2_2-2r_1r_2 \cos(\theta_1-\theta_2)}+|z_1-z_2|.
\end{aligned}
\end{equation}

The unit ball of this space is a double cone with cross-section of a square.
It is already known that $e(X)=5$ (see \cite{L}).

\section{Evaluation of the Hadwiger number of Petty space}

Let $S_X$ be the unit sphere of the 3-dimensional Petty space $X$. Given two points $\alpha_1 \neq \alpha_2 \in S_X \setminus \{\pm e_3\}$, the angle $\theta$ between the two vectors $(r_1,\theta_1)$ and $(r_2,\theta_2)$ of the polar plane, i.e. the angle
$\theta \in [0,\pi]$ such that $\cos \theta=\cos(\theta_1-\theta_2)$ will be called the \textit{angular distance}
of $\alpha_1,\alpha_2$.

Note that the angular distance of $\alpha_1,\alpha_2$ can be calculated using formula (2) as a function of
$z_1,z_2$ (or equivalently of the polar radii $r_k=1-|z_k|,k=1,2$) and of the distance $d=||\alpha_1-\alpha_2||$.
We denote by $\theta(z_1,z_2;d)$ the angular distance of $\alpha_1,\alpha_2$. In this paper we only consider the
1-\textit{angular distance} ($d=1$), which we denote by $\theta(z_1,z_2)$.

Regarding the 1-angular distance, by formula (2) we obtain:\\
\noindent (i) when $0 \le z_1 \le z_2 <1$
\begin{equation}
\begin{aligned}
\theta(z_1,z_2)=\arccos \left( 1-\frac{2(r_2-r_1)+1}{2r_1r_2} \right), 0<r_2 \le r_1
\end{aligned}
\end{equation}

(also when $-1<z_2 \le z_1 \le 0$ we find the same expression).

\noindent (ii) when $-1 < z_1 \le 0 \le z_2 <1$ and $z_2-z_1 \le 1$
\begin{equation}
\begin{aligned}
\theta(z_1,z_2)=\arccos \left( \frac{2(r_1+r_2)-1}{2r_1r_2}-1 \right).
\end{aligned}
\end{equation}

\begin{rems}

\begin{enumerate}
\item Given a pair of $z$-coordinates $z_1,z_2$ with $-1<z_1 \le z_2<1$ and $z_2-z_1 \le 1$,
formulas (3) and (4) determine the angle $\theta=\theta(z_1,z_2)$ such that when $\theta=\theta_1-\theta_2$,
then the points $\alpha_k=(r_k,\theta_k,z_k), k=1,2$ of $S_X \setminus \{\pm e_3\}$ have distance $||\alpha_1-\alpha_2||=1$,
i.e. they have 1-angular distance ($r_k=1-|z_k|,k=1,2$). 
\item Let $b_k=(r_k,\phi_k,z_k),k=1,2$ be two distinct points of $S_X \setminus \{\pm e_3\}$ with $|z_1-z_2| \le 1$,
$\theta=\theta(z_1,z_2)$ and $\phi$ be the angular distance of $b_1,b_2$. Then
\begin{equation}
\begin{aligned}
\phi \ge \theta \Leftrightarrow \cos \phi \le \cos \theta \Leftrightarrow ||b_1-b_2|| \ge 1.
\end{aligned}
\end{equation}

\item Let $S=\{\alpha_k:1 \le k \le n\}, n \ge 2$ be a finite subset of $S_X \setminus \{\pm e_3\}$, where
\begin{equation}
\begin{aligned}
\alpha_k=(r_k,\theta_k,z_k), -\frac{1}{2} \le z_k \le \frac{1}{2}, 1 \le k \le n \;\; \textrm{and} \;\; 
0 \le \theta_1 \le \theta_2 \le \dots \le \theta_n <2 \pi. 
\end{aligned}
\end{equation}
 
Suppose $S$ is 1-separated. Then summing the 1-angular distances of consecutive points of $S$ we have the inequality
\begin{equation}
\begin{aligned}
\sum_{k=1}^{n-1} \theta(z_k,z_{k+1})+\theta(z_n,z_1) \le 2 \pi.
\end{aligned}
\end{equation}

Indeed (taking into account (5)) we have:

$\theta(z_1,z_2)+\theta(z_2,z_3)+\dots+\theta(z_{n-1},z_n)+\theta(z_1,z_n) \le
(\theta_2-\theta_1)+(\theta_3-\theta_2)+\dots+(\theta_n-\theta_{n-1})+(\theta_1 +2 \pi-\theta_n)=2 \pi$.

It is also clear that (7) holds for any finite sequence $1 \le k_1<k_2<\dots<k_m \le n$, i.e.
\[\sum_{i=1}^{m-1} \theta(z_{k_i},z_{k_{i+1}})+\theta(z_{k_1},z_{k_m}) \le 2 \pi.  \]

Configuration (6) will be called the \textit{regular form} of $S$. Clearly, any finite subset of 
$S_X \setminus \{\pm e_3\}$ whose points have $z$-coordinates in $\left[ -\frac{1}{2},\frac{1}{2} \right]$ can be
arranged in regular form. Subsequently, whenever we consider a 1-separated subset of $S_X \setminus \{\pm e_3\}$
of this kind, we suppose that it is arranged in regular form.

\item Let $S=\{\alpha_k:1 \le k \le n\} \subseteq S_X \setminus \{\pm e_3\}$ in regular form.
In order to prove that $S$ is not 1-separated, it is sufficient (because of (7)) to find a sequence 
$\{\phi_k:1 \le k \le n \} \subseteq [0,\pi]$ with $\phi_k \le \theta(z_k,z_{k+1}),1 \le k \le n-1$, 
$\phi_n \le \theta(z_1,z_n)$ and 
\[\sum_{k=1}^n \phi_k>2 \pi.\]
\end{enumerate}
\end{rems}

We evaluate some 1-angular distances we are going to use in this section:

\begin{equation*}
\begin{aligned}
&\theta \left( \frac{1}{2},\frac{1}{3}\right)=\arccos0=90^o,\; &&\theta \left( \frac{1}{3},\frac{1}{4}\right)=
\arccos \left(\frac{1}{6} \right)>80.405^o, \\ 
&\theta \left( \frac{1}{3},-\frac{1}{3}\right)=
\arccos \left(\frac{7}{8} \right)>28.955^o, \; &&\theta \left( \frac{1}{3},-\frac{1}{4}\right)=
\theta \left( -\frac{1}{3},\frac{1}{4}\right)=\arccos \left(\frac{5}{6} \right)>33.557^o, \\ 
&\theta \left( -\frac{1}{4},\frac{1}{4}\right)=\arccos \left(\frac{7}{9} \right)>38.94^o, \;
&&\theta \left( \frac{1}{2},\frac{1}{4}\right)=\arccos \left(\frac{1}{3} \right)>70.528^o.
\end{aligned}
\end{equation*}

We first find a lower estimate for $H(X)$ and show that $H(X) \ge H'(X) \ge 14$.
This is established through the following:

\begin{example}
There is a $1^+$-separated subset of $S_X$ of 14 points. Let
\begin{equation*}
\begin{aligned}
&\alpha_1=(0.7 \cos0^o,0.7 \sin0^o,0.3),\; &&\alpha_2=(\cos45^o,\sin45^o,0), \\ 
&\alpha_3=(0.7 \cos90^o,0.7 \sin90^o,-0.3),\; &&\alpha_4=(\cos135^o,\sin135^o,0), \\
&\alpha_5=(0.7 \cos180^o,0.7 \sin180^o,0.3),\; &&\alpha_6=(\cos225^o,\sin225^o,0), \\
&\alpha_7=(0.7 \cos270^o,0.7 \sin270^o,-0.3),\; &&\alpha_8=(\cos315^o,\sin315^o,0).
\end{aligned}
\end{equation*}

The set $\{\alpha_i:1 \le i \le 8\}$ is easily checked to be $1^+$-separated because of the
1-angular distance $\theta(0.3,0) \approx 44.41^o<45^o$. Let also
\begin{equation*}
\begin{aligned}
&b_1=(0.501 \cos85^o,0.501 \sin85^o,0.499),\; &&b_2=(0.501 \cos265^o,0.501 \sin265^o,0.499), \\ 
&b_3=(0.501 \cos175^o,0.501 \sin175^o,-0.499),\; &&b_4=(0.501 \cos355^o,0.501 \sin355^o,-0.499).
\end{aligned}
\end{equation*}

One can check that the set $\{\alpha_i:1 \le i \le 8\} \cup \{b_j:1 \le j \le 4\} \cup \{\pm e_3\}$
is a $1^+$-separated subset of $S_X$ (see also \cite{MV}, Remarks 7(2)).
\end{example}

To obtain an upper bound of $H(X)$ we will use the following Lemma, which states that when the vertical
distance of two points decreases, their 1-angular distance increases.

\begin{lemm}
Let $-\frac{1}{2} \le z'_1 \le z_1 \le z_2 \le z'_2 \le \frac{1}{2}$. Then $\theta(z_1,z_2) \ge \theta(z'_1,z'_2)$.
\end{lemm}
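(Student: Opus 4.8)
The plan is to reduce the claim to two one-sided monotonicity statements, since the hypothesis $-\tfrac12\le z_1'\le z_1\le z_2\le z_2'\le\tfrac12$ can be reached from $(z_1,z_2)$ by first decreasing $z_1$ to $z_1'$ (keeping $z_2$ fixed) and then increasing $z_2$ to $z_2'$ (keeping $z_1'$ fixed); by the obvious symmetry $\theta(z_1,z_2)=\theta(-z_2,-z_1)$ these two steps are equivalent, so it suffices to prove: if $-\tfrac12\le z_1'\le z_1\le z_2\le\tfrac12$ then $\theta(z_1,z_2)\ge\theta(z_1',z_2)$. Writing $r_k=1-|z_k|$ and recalling that $\theta$ is a decreasing function of its $\arccos$-argument, this amounts to showing that the relevant cosine expression from (3) or (4) is nondecreasing as $z_1$ increases (equivalently, as $r_1$ moves toward $1-|z_2|$, i.e.\ the two radii get closer or the lower point moves up toward $0$).

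Next I would split into the cases dictated by the sign pattern, exactly as in the derivation of (3) and (4). The genuinely delicate configuration is when decreasing $z_1'$ crosses $0$, so one should treat: (a) both $z_1',z_1$ on the same side (say $0\le z_1\le z_2$, or $z_1\le z_1'\le 0$), where formula (3) applies throughout; (b) $z_1'\le 0\le z_1\le z_2$, where formula (4) governs the pair $(z_1',z_2)$ and formula (3) the pair $(z_1,z_2)$ — here I would also use the boundary continuity at $z_1=0$, namely that (3) with $z_1=0$ (so $r_1=1$) and (4) with $z_1=0$ agree, which lets one chain the two monotonicity sub-results through the value at $z_1=0$. Throughout, the constraint $|z_k|\le\tfrac12$ guarantees $r_k\ge\tfrac12$ and $z_2-z_1\le 1$, so the formulas are valid and denominators are bounded away from $0$.

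For the core computation in case (a), with $0\le z_1\le z_2<1$ and $r_1=1-z_1$, the $\arccos$-argument from (3) is
\begin{equation*}
g(r_1)=1-\frac{2(r_2-r_1)+1}{2r_1 r_2}=1-\frac{1}{r_1}+\frac{2r_2-1}{2r_1 r_2},
\end{equation*}
and I would just differentiate in $r_1$ (or directly compare values at $r_1$ and $r_1'=r_1+(z_1-z_1')>r_1$) to see the sign is what we want on $\tfrac12\le r_2\le r_1\le 1$; increasing $z_1$ means decreasing $r_1$, and one checks $g$ is decreasing in $r_1$ on this range, hence increasing as $z_1$ increases, giving $\theta(z_1,z_2)\ge\theta(z_1',z_2)$. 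Case (b) is handled by the analogous elementary estimate for the function coming from (4), $h(r_1)=\frac{2(r_1+r_2)-1}{2r_1r_2}-1=\frac{1}{r_2}-\frac{1}{r_1 r_2}+\frac{1}{r_1}\cdot\frac{?}{}$ — i.e.\ rearranged as $\frac{1}{r_2}+\frac{2r_2-1}{2r_1r_2}\cdot(\text{const})$, monotone in $r_1$ on the same range.

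The main obstacle I anticipate is purely bookkeeping: making sure the case division is exhaustive and that the chaining through $z_1=0$ (and symmetrically $z_2=0$) is legitimate, i.e.\ that the two formula regimes glue continuously and with the correct inequality direction. The individual monotonicity checks are one-variable calculus on an interval where everything is smooth and denominators are controlled by $|z_k|\le\tfrac12$, so no real analytic difficulty arises there; the restriction to $[-\tfrac12,\tfrac12]$ is exactly what keeps $z_2-z_1\le 1$ and the radii $\ge\tfrac12$, which is where it gets used.
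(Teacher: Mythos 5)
Your overall route is the same as the paper's: monotonicity in one coordinate at a time, obtained by differentiating the cosine expressions in (3) and (4), with the hemisphere-crossing case handled by chaining through $z=0$, where the two formulas agree. As written, though, the execution has two concrete problems. First, the sign in your case (a) is wrong. With $r_1=1-z_1$ the correct expansion is $g(r_1)=1-\frac{1}{r_1}+\frac{2r_1-1}{2r_1r_2}$ (not $\frac{2r_2-1}{2r_1r_2}$), and $g'(r_1)=\frac{2r_2+1}{2r_1^{2}r_2}>0$: $g$ is \emph{increasing} in $r_1$, hence decreasing as $z_1$ increases, which is exactly why $\theta$ increases. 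Your stated claim that $g$ is decreasing in $r_1$, followed through, would give $\cos\theta$ increasing in $z_1$ and hence $\theta(z_1,z_2)\le\theta(z_1',z_2)$, i.e.\ the reverse inequality; your conclusion survives only because the true sign is the opposite of the one you assert.

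Second, your reduction to ``$\theta$ is nondecreasing in the lower variable, the upper one being fixed'' is legitimate, but the single computation you set up does not prove it in all positions. When the fixed $z_2$ is negative (so all three heights are $\le 0$), the point being varied is the one farther from the equator, i.e.\ the one with the \emph{smaller} radius; in formula (3) this means differentiating with respect to the other radius, and there the derivative is $-\frac{2r_1-1}{2r_1r_2^{2}}\le 0$, whose sign holds only because the fixed radius is $\ge\frac12$. This is precisely the paper's second computation, it is where the restriction to $\left[-\frac12,\frac12\right]$ genuinely enters (your $g'$ is positive unconditionally), and it is missing from your sketch; note that your symmetry $\theta(z_1,z_2)=\theta(-z_2,-z_1)$ turns this configuration into a comparison in the \emph{upper} variable, i.e.\ outside the statement you reduced to, so it cannot be dismissed by symmetry. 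Finally, your case-(b) function is left garbled (the ``?'' placeholder): what is needed is $h(r_1)=\frac{1}{r_2}+\frac{2r_2-1}{2r_1r_2}-1$ with $h'(r_1)=-\frac{2r_2-1}{2r_1^{2}r_2}\le 0$ (again using $r_2\ge\frac12$); with that in place your gluing at $z_1=0$, where (3) and (4) both give $\frac{1}{2r_2}$, works as intended and the proof closes along the same lines as the paper's.
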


\begin{proof}
We first consider the case when all the points lie on the same hemisphere. Suppose that 
$0 \le z'_1 \le z_1 \le z_2 \le z'_2 \le \frac{1}{2}$ (the case $-\frac{1}{2} \le z'_2 \le z_2 \le z_1 \le z'_1 \le 0$
is similar, due to symmetry).

Fix $z_2$ and let $0 \le z \le z_2 \le \frac{1}{2}$, $r=1-|z|$ and $r_2=1-|z_2|$, $\frac{1}{2} \le r_2 \le r$.
We differentiate the expression of $\cos \theta(z,z_2)$ from (3) with respect to $r$:
\[\left( 1-\frac{2(r_2-r)+1}{2r_2r} \right)'=\frac{4 r^2_2+2r_2}{4r^2_2r^2}>0 \]

so as $z$ increases to $z_2$ (then $r$ decreases to $r_2$), the 1-angular distance $\theta(z,z_2)$ increases.

Now fix $z_1$ and let $0 \le z_1 \le z \le \frac{1}{2}$, $r=1-|z|$ and $r_1=1-|z_1|$, $\frac{1}{2} \le r \le r_1$.
We differentiate the expression of $\cos \theta(z,z_1)$ from (3) with respect to $r$:
\[\left( 1-\frac{2(r-r_1)+1}{2r_1r} \right)'=\frac{2 r_1(1-2r_1)}{4r^2_1r^2} \le 0 \]

so as $z$ decreases to $z_1$ (then $r$ increases to $r_1$), the 1-angular distance $\theta(z,z_1)$ increases
and the first case is valid.

For the general case, suppose first that $-\frac{1}{2} \le z_1 \le 0 \le z_2 \le \frac{1}{2}$. 
Fix $z_1$ and let $-\frac{1}{2} \le z_1 \le 0 \le z \le \frac{1}{2}$, $r=1-|z|$ and $r_1=1-|z_1|$, $\frac{1}{2} \le r , r_1$.
We differentiate the expression of $\cos \theta(z,z_1)$ from (4) with respect to $r$:
\[\left( \frac{2(r+r_1)-1}{2r_1r}-1 \right)'=\frac{2 r_1(1-2r_1)}{4r^2_1r^2} \le 0 \]

so as $z$ decreases to $0$ (then $r$ increases to $1$), the 1-angular distance $\theta(z,z_1)$ increases.
The case $-\frac{1}{2} \le z \le 0 \le z_2 \le \frac{1}{2}$ is similar due to symmetry.

The remaining case is when $-\frac{1}{2} \le z'_1 \le z_1 \le z_2 \le 0 \le z'_2 \le \frac{1}{2}$.
By the previous reasoning we have that
\[\theta(z'_1,z'_2) \le \theta(z'_1,0) \le \theta(z_1,z_2)\]
where in the first inequality $z'_1,0$ are considered to lie on different hemispheres, while
in the second we consider them on the same hemisphere.
\end{proof}

We partition the unit sphere $S_X$ into the following subsets:
\begin{equation*}
\begin{aligned}
&D_1=\left\{(x,y,z) \in S_X:z>\frac{1}{2} \right\},\; &&D'_1=\left\{(x,y,z) \in S_X:z<-\frac{1}{2}\right\}, \\ 
&D_2=\left\{(x,y,z) \in S_X:\frac{1}{3}<z \le \frac{1}{2}\right\},\; 
&&D'_2=\left\{(x,y,z) \in S_X:-\frac{1}{2} \le z <-\frac{1}{3}\right\}, \\
&D_0=\left\{(x,y,z) \in S_X:-\frac{1}{3} \le z \le \frac{1}{3}\right\}.
\end{aligned}
\end{equation*}

Clearly $S_X=D_0 \cup D_1 \cup D'_1 \cup D_2 \cup D'_2$ is a disjoint union. We are going to evaluate
the maximum cardinality of a 1-separated subset of each of these sets.

\begin{prop}

\begin{enumerate}[(i)]
\item A 1-separated subset of $D_1$ (or $D'_1$ due to symmetry) has cardinality at most 1.
\item A 1-separated subset of $D_2$ (or $D'_2$ due to symmetry) has cardinality at most 3.
\end{enumerate}
\end{prop}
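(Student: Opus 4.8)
For (i) I would argue by a direct distance estimate; the case of $D_1'$ follows from the isometry $(r,\theta,z)\mapsto(r,\theta,-z)$. Every point of $D_1$ has $z>\tfrac12$, hence polar radius $r=1-z<\tfrac12$ and positive height. If $\alpha_1,\alpha_2\in D_1$ have radii $r_1,r_2$ and heights $z_k=1-r_k$, then bounding the first term of (2) by $\sqrt{r_1^2+r_2^2+2r_1r_2}=r_1+r_2$ and using $|z_1-z_2|=|r_1-r_2|$ gives
\[
\|\alpha_1-\alpha_2\|\le (r_1+r_2)+|r_1-r_2|=2\max\{r_1,r_2\}<2\cdot\tfrac12=1 .
\]
So distinct points of $D_1$ are never at distance $\ge 1$, and a $1$-separated subset of $D_1$ has at most one element.

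For (ii) (again $D_2'$ is symmetric) the plan is to combine Lemma 1 with the summation inequality (7). Suppose $D_2$ contained a $1$-separated set of four points. Their heights lie in $(\tfrac13,\tfrac12]\subseteq[-\tfrac12,\tfrac12]$, so the set can be put in regular form $\alpha_k=(r_k,\theta_k,z_k)$, $1\le k\le 4$, and (7) gives
\[
\theta(z_1,z_2)+\theta(z_2,z_3)+\theta(z_3,z_4)+\theta(z_4,z_1)\le 2\pi .
\]
By Lemma 1, for any two heights $z_i\le z_j$ in $(\tfrac13,\tfrac12]$ one has $\theta(z_i,z_j)\ge\theta(\tfrac13,\tfrac12)=\arccos 0=\tfrac{\pi}{2}$. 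If this can be sharpened to a \emph{strict} inequality for points of $D_2$, the left-hand side above is $>4\cdot\tfrac{\pi}{2}=2\pi$, a contradiction, giving the bound $3$.

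The only substantive step is thus the strictness: $\theta(z_i,z_j)>\tfrac{\pi}{2}$ whenever $z_i,z_j\in(\tfrac13,\tfrac12]$. I would get this by showing $\theta(z_i,z_j)=\tfrac{\pi}{2}$ forces $z_i\le\tfrac13$. Setting $\cos\theta=0$ in formula (3) (both heights are positive, so (3) applies) with $r_1\ge r_2$ the two radii, one is led to the relation $r_1=\tfrac{2r_2+1}{2r_2+2}$; a short check, using $r_1,r_2\in[\tfrac12,\tfrac23)$ and $r_1\ge r_2$, shows the only admissible solution is $r_1=\tfrac23$, i.e. $z_i=\tfrac13\notin D_2$. (Alternatively, the derivative computations in the proof of Lemma 1 are strictly signed away from the endpoint $r=\tfrac12$, which already yields strict monotonicity of $\theta$ in each height on $(\tfrac13,\tfrac12]$ and hence $\theta(z_i,z_j)>\theta(\tfrac13,\tfrac12)$.) With the strict bound in hand, each of the four consecutive $1$-angular distances exceeds $\tfrac{\pi}{2}$, contradicting (7); so any $1$-separated subset of $D_2$ has cardinality at most $3$.
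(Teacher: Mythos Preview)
Your proof is correct and follows the same approach as the paper: a direct distance bound for (i), and for (ii) the observation that any two heights in $(\tfrac13,\tfrac12]$ give a $1$-angular distance exceeding $\tfrac{\pi}{2}$, so four such points would violate inequality (7). You are in fact more careful than the paper, which simply asserts the strict inequality $\theta(z_1,z_2)>\theta(\tfrac13,\tfrac12)$ ``from Lemma 1'' without further comment; your justification of strictness (either via the equation $r_1=\tfrac{2r_2+1}{2r_2+2}$ forcing $r_1\ge\tfrac23$, or via the strictly signed derivatives in the proof of Lemma 1) fills that small gap.
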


\begin{proof}
\noindent (i) Let $a=(r_1,0,1-r_1)$ and $b=(r_2 \cos \theta,r_2 \sin \theta,1-r_2)$
with $a,b \in D_1$ (note that the rotation around the $z$-axis is an isometry, so we may
suppose that one of the points has cylindrical coordinate $\theta=0$). Then
\[||a-b||=\sqrt{(r_1+r_2)^2-2r_1r_2(1+\cos \theta)}+|r_1-r_2| \le r_1+r_2+|r_1-r_2|<1\]
in any case, since $r_1,r_2<\frac{1}{2}$ for $a,b \in D_1$.

\noindent (ii) Let $\alpha_i=(x_i,y_i,z_i) \in D_2,i=1,2$. Then $\frac{1}{3} <z_1,z_2 \le \frac{1}{2}$,
so from Lemma 1 we get that 
\[\theta(z_1,z_2)>\theta \left( \frac{1}{2},\frac{1}{3} \right)=90^o.\]

Now for $S \subseteq D_2$ in regular form with $|S|=4$, the sum of the 1-angular distances of consecutive
points is $>2 \pi$, hence $S$ cannot be 1-separated (see also Remarks 1(3) and (4)).
\end{proof}

We next prove that a 1-separated subset of $D_0$ can have at most 10 points. Let
\begin{equation*}
\begin{aligned} 
&D_3=\left\{(x,y,z) \in S_X:\frac{1}{4}<z \le \frac{1}{3}\right\},\; 
&&D'_3=\left\{(x,y,z) \in S_X:-\frac{1}{3} \le z <-\frac{1}{4}\right\} \; \textrm{and} \\
&\overline{D_0}=\left\{(x,y,z) \in S_X:-\frac{1}{4} \le z \le \frac{1}{4}\right\}.
\end{aligned}
\end{equation*}

\begin{prop}
A 1-separated subset of $D_0$ has cardinality at most 10.
\end{prop}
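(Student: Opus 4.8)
The plan is to argue by contradiction. Assume $S\subseteq D_0$ is $1$-separated with $|S|=11$ and, by Remarks~1(3), put it in regular form: $S=\{\alpha_k=(r_k,\theta_k,z_k):1\le k\le 11\}$ with $0\le\theta_1\le\cdots\le\theta_{11}<2\pi$ and every $z_k\in[-\tfrac13,\tfrac13]$. Set $U=S\cap D_3$, $V=S\cap\overline{D_0}$, $W=S\cap D_3'$, so $|U|+|V|+|W|=11$; since $z\mapsto-z$ is an isometry of $X$ we may assume $|U|\ge|W|$. The proof then runs through the finitely many admissible triples $(|U|,|V|,|W|)$. The basic tool is Lemma~1 together with the table of $1$-angular distances above, which bounds below the $1$-angular distance of two points according to which layers they occupy: two points of $U$ (or of $W$) are at $1$-angular distance $\ge\theta(\tfrac13,\tfrac14)=\arccos\tfrac16>80.4^\circ$; two points of $V$, $\ge\theta(\tfrac14,-\tfrac14)=\arccos\tfrac79>38.9^\circ$; a point of $U$ and one of $V$, or of $V$ and one of $W$, $\ge\theta(\tfrac13,-\tfrac14)=\arccos\tfrac56>33.5^\circ$; a point of $U$ and one of $W$, $\ge\theta(\tfrac13,-\tfrac13)=\arccos\tfrac78>28.9^\circ$. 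Feeding the first bound into~(7) applied to the subsequence of $S$ lying in $U$ (resp.\ in $W$) gives $|U|\le4$ and $|W|\le4$.

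Next come the easy reductions. If $W=\varnothing$ then $S\subseteq D_3\cup\overline{D_0}=\{(x,y,z)\in S_X:-\tfrac14\le z\le\tfrac13\}$, so by Lemma~1 every consecutive $1$-angular distance of the regular form is $\ge\arccos\tfrac56>33.5^\circ$, and since $11\cdot 33.5^\circ>360^\circ$ this contradicts~(7); the case $U=\varnothing$ is symmetric, so $|U|\ge|W|\ge1$. If $|W|=1$, let $w$ be its point; then $S\setminus\{w\}$ is a $10$-point subset of $D_3\cup\overline{D_0}$ whose $10$ regular-form gaps sum to $360^\circ$ and each exceeds $\arccos\tfrac56$, so the gap $(a,b)$ into which $w$ is inserted has length $\le 360^\circ-9\arccos\tfrac56<58^\circ$. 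But that length equals the sum of the two arcs from $a$ to $w$ and from $w$ to $b$, and by $1$-separation and~(5) each such arc is at least the corresponding $1$-angular distance, hence at least $\arccos\tfrac78>28.9^\circ$ (and at least $\arccos\tfrac56>33.5^\circ$ if the relevant endpoint lies in $\overline{D_0}$); this already forces $a,b\in D_3$, and then $a,b$ are two points of $U$ two steps apart in the regular form, so their angular distance is $\ge\arccos\tfrac16>80.4^\circ$, exceeding the length $<58^\circ$ of the gap — a contradiction. Thus $|U|\ge|W|\ge2$.

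It remains to rule out the triples with $2\le|W|\le|U|\le4$, i.e.\ $(|U|,|W|)\in\{(2,2),(3,2),(4,2),(3,3),(4,3),(4,4)\}$, and this is the main obstacle. For each one shows, via Remarks~1(4), that the $1$-angular distances around the regular form must sum to more than $2\pi$, contradicting~(7). The difficulty is that the crude constant bounds of the first paragraph are not quite enough: for instance when $(|U|,|W|)=(4,4)$, $|V|=3$, the $>80.4^\circ$ separation among the points of $U$ and among those of $W$ forces the $U$- and $W$-points to interlace around the circle, and the best configuration of this type has edge-bounds summing only to roughly $346^\circ$. To close the gap one uses the coupling in Lemma~1 between the height of a point and how far it must stay from the other points in its layer — a pair of points of $U$ near $z=\tfrac13$ repel each other by $\theta(\tfrac13,\tfrac13)=\arccos(-\tfrac18)>97^\circ$, not merely $80.4^\circ$, so a point of $U$ can lie angularly close to a point of $W$ only at the cost of being far from the remaining points of $U$ (and symmetrically for $W$). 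Quantifying this trade-off, i.e.\ replacing the constant estimates by the true functions $\theta(z_i,z_j)$ and estimating, produces $\sum_{k=1}^{11}\theta(z_k,z_{k+1})>2\pi$ in each remaining case, contradicting~(7) and completing the proof that $|S|\le10$.
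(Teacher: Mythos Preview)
Your first two paragraphs are fine, but the third paragraph is not a proof: you acknowledge that the constant layer-by-layer bounds fail (giving only about $346^\circ$ in the $(4,4)$ case), propose a ``trade-off'' between the height of a $U$-point and its $1$-angular distance to the other $U$-points, and then simply assert that ``quantifying this trade-off \dots\ produces $\sum>2\pi$ in each remaining case''. No such quantification is carried out, and it is not clear that this line of argument can be made to work at all --- the function $\theta(z_1,z_2)$ is monotone in the right direction, but turning that into a uniform lower bound over all admissible configurations for each of the six triples is a genuinely nontrivial optimisation that you have not done.

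The paper's proof avoids any trade-off argument. It proceeds in two steps. First it shows (Claim~1) that necessarily $|U|=4$ and $|W|\ge 3$ (or vice versa): a preliminary observation is that no two consecutive points of the regular form can both lie in $U$ (otherwise one edge alone is $>80.4^\circ$ and the remaining ten are each $>28.9^\circ$), and then for each smaller triple one counts the number $k$ of consecutive $U$--$W$ edges and checks, using only the constant bounds, that the $11$-edge sum exceeds $360^\circ$ for every admissible $k$. Second, once $|U|=4$ with no two $U$-points adjacent, the four $U$-points split the remaining seven points into four nonempty gaps. Here the key device --- which you do not use --- is the \emph{subsequence} form of inequality~(7): if two gaps have size~$1$, drop those two singletons and apply~(7) to the resulting $9$-point subsequence, obtaining two $U$--$U$ edges ($>80.4^\circ$ each) plus seven edges each $>28.9^\circ$, total $>363^\circ$. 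Hence exactly one gap has size~$1$ and the other three have size~$2$; dropping the one singleton and applying~(7) to the $10$-point subsequence gives one $U$--$U$ edge plus three blocks $U$--$x$--$y$--$U$, each of which (since $x,y$ cannot both lie in $W$) contributes at least $\theta(\tfrac13,-\tfrac13)+2\,\theta(\tfrac13,-\tfrac14)>96^\circ$, for a total $>368^\circ$. The missing idea, then, is to collapse $U$--$x$--$U$ paths into a single $U$--$U$ edge via the subsequence inequality, rather than to try to squeeze more out of per-edge height estimates.
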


\begin{proof}
Let $S \subseteq D_0$ be 1-separated with $|S|=11$. Since $\theta \left( \frac{1}{4},\frac{1}{3} \right)>80.405^o$
we get that $|S \cap D_3| \le 4$ and $|S \cap D'_3| \le 4$ (see also Prop.1(ii)).

\begin{Claim}
For $S \subseteq D_0$ 1-separated with $|S|=11$ we must have that $|S \cap D_3|=4$ and $|S \cap D'_3| \ge 3$
(or vice versa).
\end{Claim}
 \begin{proof} (of the Claim)
 Suppose that $|S \cap D_3|=|S \cap D'_3|=3$. The remaining 5 points of $S$ lie on $S \cap \overline{D_0}$.
 
 Note that assuming two consecutive points of $S \cap D_3$ (or $S \cap D'_3$), applying Lemma 1 
 gives a sum of 1-angular distances
 
 \[ \ge \theta \left(\frac{1}{3},\frac{1}{4} \right)+10 \cdot \theta \left(-\frac{1}{3},\frac{1}{3} \right)
 \ge 80.405^o+10 \cdot 28.955^o=369.955^o \]
 so this is not possible.
 
 We sum the 1-angular distances of consecutive points of $S$ (see Remarks 1(3) and (4)) examining the cases
 of how many pairs of consecutive points, one from $D_3$ and one from $D'_3$ we have (there can be up to 5 such pairs).
 
 If there is no such pair, then each 1-angular distance of consecutive points is 
 $\ge \theta \left(\frac{1}{3},-\frac{1}{4} \right) >33.557^o$ and the sum is $>11 \cdot 33.557^o=369.127^o$.
 So there can be 1 to 5 such pairs of points. Observing the regular form of $S$, any point of $D_3$ can either
 have 2 neighbouring points from $D'_3$, or one from $D'_3$ and one from $\overline{D_0}$, 
 or two points from $\overline{D_0}$.
 
 Examining cases for $k=1,2,\dots,5$ the number of pairs of consecutive points, one from $D_3$ and one from $D'_3$,
 we find a sum of 1-angular distances
 \[ \ge k \cdot \theta \left(\frac{1}{3},-\frac{1}{3} \right)+2(6-k) \cdot \theta \left(\frac{1}{3},-\frac{1}{4} \right)+
 (k-1) \cdot \theta \left(-\frac{1}{4},\frac{1}{4} \right)\]
 \[>k \cdot 28.955^o+2(6-k) \cdot 33.557^o +(k-1) \cdot 38.94^o>360^o\]
 for $k=1,2,\dots,5$. So we cannot have both $|S \cap D_3|<4$ and $|S \cap D'_3|<4$.
 
 Suppose now that $|S \cap D_3|=4$ and $|S \cap D'_3|=2$. If there is no pair of consecutive points from $D_3$ and $D'_3$,
 as in the previous case we obtain a contradiction. In this case there can be up to 4 such pairs of points and exactly the
 same calculation as before shows that the sum of 1-angular distances is $>360^o$.
 
 Assuming that less than 6 points of $S$ lie on $D_3 \cup D'_3$, by analogous calculations we get even larger sums
 of 1-angular distances and the proof of the Claim is complete. 
 \end{proof}

So we may assume that $|S \cap D_3|=4$, $|S \cap D'_3| \ge 3$ and $|S \cap \overline{D_0}| \ge 3$.
We consider the regular form of $S$ and observe the relative position of points of $S \cap D_3$ with respect
to other points of $S$.

Suppose we have two pairs of points of $S \cap D_3$ with a single other point of $S$ between them. The sum
of 1-angular distances is then
\[\ge 2 \cdot \theta \left(\frac{1}{3},\frac{1}{4} \right)+7 \cdot \theta \left(-\frac{1}{3},\frac{1}{3} \right)
>2 \cdot 80.405^o+7 \cdot 28.955^o =363.495^o.\]

Recall that it is not possible to have two consecutive points of $S \cap D_3$ or of $S \cap D'_3$. Hence exactly one pair of
points of $S \cap D_3$ has a single other point of $S$ between them and 3 pairs of points of $S \cap D_3$ have two other points 
of $S$ between them, which results to a sum of 1-angular distances
\[\ge 3 \left[ \theta \left(-\frac{1}{3},\frac{1}{3} \right)+2 \cdot \theta \left(-\frac{1}{3},\frac{1}{4} \right) \right]+
\theta \left(\frac{1}{3},\frac{1}{4} \right) >3 (28.955^o+2 \cdot 33.557^o)+80.405^o =368.612^o\]
which is also impossible and the Proposition holds. 

\end{proof}

Combining the results of Propositions 1 and 2 we obtain the following easy

\begin{cor}
$H(X) \le 18$.
\end{cor}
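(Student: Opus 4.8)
The plan is to combine the two preceding Propositions additively over the partition of the unit sphere that was just introduced. Recall from the Introduction that $H(X)$ equals the maximum cardinality of a $1$-separated subset of the unit sphere $S_X$; hence it suffices to bound $|S|$ for an arbitrary $1$-separated set $S \subseteq S_X$.

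First I would write $S$ as the disjoint union $S = (S \cap D_0) \cup (S \cap D_1) \cup (S \cap D'_1) \cup (S \cap D_2) \cup (S \cap D'_2)$, using the fact that $S_X = D_0 \cup D_1 \cup D'_1 \cup D_2 \cup D'_2$ is itself a disjoint union. Each piece $S \cap D_i$ is again $1$-separated, being a subset of $S$, so the three previous results apply to it directly: Proposition 1(i) gives $|S \cap D_1| \le 1$ and $|S \cap D'_1| \le 1$, Proposition 1(ii) gives $|S \cap D_2| \le 3$ and $|S \cap D'_2| \le 3$, and Proposition 2 gives $|S \cap D_0| \le 10$. Adding these bounds yields $|S| \le 10 + 1 + 1 + 3 + 3 = 18$, and since $S$ was arbitrary this gives $H(X) \le 18$ (the same computation of course bounds $H'(X)$ as well, since a $1^+$-separated set is $1$-separated).

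There is no genuine obstacle at this stage: the corollary is pure bookkeeping once Propositions 1 and 2 are in hand, and the choice of the cutoffs $z = \pm\tfrac12$ and $z = \pm\tfrac13$ defining the $D_i$ is precisely what makes each slice small enough to estimate by the angular-distance sum argument of Remarks 1(3)--(4). The only point worth flagging is that the bound $18$ is almost certainly not sharp --- Example 1 only exhibits $14$ kissing balls --- so the substance of the section lies entirely in the Propositions and not in their combination here.
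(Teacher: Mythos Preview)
Your proof is correct and is exactly the approach of the paper, which states only the one line ``$S_X=D_0 \cup D_1 \cup D'_1 \cup D_2 \cup D'_2$ (a disjoint union)''. You have simply spelled out the bookkeeping that the paper leaves implicit.
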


\begin{proof}
$S_X=D_0 \cup D_1 \cup D'_1 \cup D_2 \cup D'_2$ (a disjoint union).
\end{proof}

\begin{lemm}
Let $S \subseteq S_X$ be 1-separated.

\begin{enumerate}[(i)]
\item When $|S \cap D_3|=4$, then $S \cap D_2=\emptyset$.
\item When $|S \cap D_3|=3$, then $|S \cap D_2| \le 1$.
\end{enumerate}
Due to symmetry, an analogous result holds for $S \cap D'_3$ and $S \cap D'_2$.
\end{lemm}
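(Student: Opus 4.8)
The plan is to restrict attention to $S'=S\cap(D_2\cup D_3)$. This set is $1$-separated, being a subset of $S$, and all of its points have $z$-coordinate in $\left(\tfrac14,\tfrac12\right]\subseteq\left[-\tfrac12,\tfrac12\right]$ (so in particular none of them is $\pm e_3$). Hence, after arranging $S'$ in regular form, Remarks 1(3) (inequality (7)) gives
$\sum_{k=1}^{n}\theta(z_k,z_{k+1})\le 2\pi=360^{o}$, where $n=|S'|$ and indices are taken $\bmod\ n$. Writing $a=|S\cap D_3|$ and $b=|S\cap D_2|$, so that $n=a+b$, the whole argument will consist of showing that the left-hand side above is forced to exceed $360^{o}$ once $b$ is too large, which contradicts $1$-separatedness of $S'$.

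The first step is to lower-bound each summand using Lemma 1. If both $\alpha_k$ and $\alpha_{k+1}$ belong to $D_3$, their $z$-coordinates lie in $\left[\tfrac14,\tfrac13\right]$, so $\theta(z_k,z_{k+1})\ge\theta\left(\tfrac14,\tfrac13\right)=\arccos\tfrac16>80.405^{o}$; in every remaining case at least one of the two points lies in $D_2$, hence both $z$-coordinates lie in $\left[\tfrac14,\tfrac12\right]$ and $\theta(z_k,z_{k+1})\ge\theta\left(\tfrac14,\tfrac12\right)=\arccos\tfrac13>70.528^{o}$. Letting $p$ be the number of cyclically consecutive pairs both of whose points lie in $D_3$, this gives
$\sum_{k=1}^{n}\theta(z_k,z_{k+1})>80.405^{o}\cdot p+70.528^{o}\cdot(n-p)$.

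The remaining ingredient is a lower bound for $p$. Viewing the regular form of $S'$ as a cyclic word over the alphabet $\{D_3,D_2\}$: if $b\ge 1$ the $a$ occurrences of $D_3$ split into $c$ maximal blocks with $1\le c\le\min(a,b)$ (consecutive $D_3$-blocks in a cycle must be separated by at least one $D_2$), and the number of $D_3$--$D_3$ adjacencies is exactly $p=a-c\ge a-\min(a,b)=\max(a-b,0)$. Substituting into the previous display: for part (i), $a=4$, and assuming $b\ge 1$ one gets $\sum_k\theta(z_k,z_{k+1})>70.528^{o}\,(4+b)+9.877^{o}\,\max(4-b,0)$, which is $>360^{o}$ for every $b\ge 1$ (the tightest case $b=1$ already yields $>5\cdot 70.528^{o}+3\cdot 9.877^{o}>360^{o}$), so $b=0$, proving (i). For part (ii), $a=3$, and assuming $b\ge 2$ one gets $\sum_k\theta(z_k,z_{k+1})>70.528^{o}\,(3+b)+9.877^{o}\,\max(3-b,0)>360^{o}$ for every $b\ge 2$ (the tightest case $b=2$ yielding $>5\cdot 70.528^{o}+9.877^{o}>360^{o}$), so $b\le 1$, proving (ii). The statements for $S\cap D_3'$ and $S\cap D_2'$ then follow from the isometry $z\mapsto -z$.

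I expect the only step needing genuine care to be the combinatorial count $p=a-c$ together with the bound $c\le\min(a,b)$; everything else is a mechanical combination of Lemma 1 with inequality (7). The numerics sit comfortably above $360^{o}$ except in the two borderline configurations $b=1$ (for (i)) and $b=2$ (for (ii)), which is precisely why the sharper $80.405^{o}$ bound for $D_3$--$D_3$ gaps has to be used in addition to the uniform $70.528^{o}$ bound.
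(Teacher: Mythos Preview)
Your proof is correct and follows essentially the same approach as the paper: restrict to $S'=S\cap(D_2\cup D_3)$, bound each consecutive $1$-angular distance from below via Lemma~1, and contradict inequality~(7). Your systematic combinatorial bookkeeping (the bound $p\ge\max(a-b,0)$) is a little cleaner than the paper's case check, and in particular your argument for~(ii) shows that the paper's invocation of the $D_2$--$D_2$ bound $\theta\!\left(\tfrac12,\tfrac13\right)=90^{o}$ is not actually needed---the worst case is the one with nonadjacent $D_2$ points, where your bound $5\cdot 70.528^{o}+9.877^{o}>360^{o}$ already suffices.
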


\begin{proof}
We take into account that $\theta \left(\frac{1}{3},\frac{1}{4} \right) >80.405^o$ and 
$\theta \left(\frac{1}{2},\frac{1}{4} \right) >70.528^o$. When $|S \cap D_3|=4$,
assuming $S \cap D_2 \neq \emptyset$ we place the points of $S \cap (D_3 \cup D_2)$ 
around the circle (see also Remarks 1(3) and (4)) and one can check that we have a sum of 1-angular distances
\[\ge 2 \cdot \theta \left(\frac{1}{4},\frac{1}{2} \right)+3 \cdot \theta \left(\frac{1}{3},\frac{1}{4} \right)
>2 \cdot 70.528^o+3 \cdot 80.405^o =382.271^o\]
(because of Lemma 1) so (i) holds.

The proof of (ii) is similar, paying attention to the fact that two points of $S \cap D_2$  must have 1-angular distance
at least $\theta \left(\frac{1}{2},\frac{1}{3} \right)=90^o$. 
\end{proof}

Let now 
\begin{equation*}
\begin{aligned} 
&D_4=\left\{(x,y,z) \in S_X:0.245<z \le \frac{1}{3}\right\} \; \textrm{and}
&&D'_4=\left\{(x,y,z) \in S_X:-\frac{1}{3} \le z <-0.245 \right\}.
\end{aligned}
\end{equation*}

\begin{lemm}
Let $S \subseteq S_X$ be 1-separated.
\begin{enumerate}[(i)]
\item When $|S \cap D_4| \ge 2$, then $|S \cap D_2| \le 2$ (respectively when $|S \cap D'_4| \ge 2$, 
then $|S \cap D'_2| \le 2$).
\item When $|S \cap D_4| \ge 3$ and $|S \cap D_3|=2$, then $|S \cap D_2| \le 1$
(respectively when $|S \cap D'_4| \ge 3$ and $|S \cap D'_3|=2$, then $|S \cap D'_2| \le 1$).
\end{enumerate}

\end{lemm}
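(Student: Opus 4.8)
The plan is to argue by contradiction in both parts, using inequality (7). If the claimed upper bound on $|S\cap D_2|$ fails, I will single out five distinct points of $S$ whose $z$-coordinates all lie in the slab $0.245<z\le\frac12$, arrange them in regular form --- they lie in $S_X\setminus\{\pm e_3\}$ and have $z$-coordinates in $[-\frac12,\frac12]$, so Remarks 1(3) and (4) apply --- and contradict (7) by exhibiting, for these five points, lower bounds on the $1$-angular distances of consecutive points whose sum exceeds $360^o$. Each such lower bound comes from Lemma 1: the $1$-angular distance of two chosen points is at least $\theta$ evaluated at the two extreme $z$-levels of the blocks they occupy. Beyond $\theta(\frac13,\frac12)=90^o$, $\theta(\frac14,\frac13)=\arccos\frac16>80.405^o$ and $\theta(\frac14,\frac12)=\arccos\frac13>70.528^o$, I will use the two computations from (3): $\cos\theta(0.245,\frac13)=\frac{55}{302}$, hence $\theta(0.245,\frac13)>79.5^o$, and $\cos\theta(0.245,\frac12)=\frac{53}{151}$, hence $\theta(0.245,\frac12)>69.4^o$. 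The statements for $D_4'$, $D_3'$, $D_2'$ then follow from the proved ones via the isometry $z\mapsto -z$.

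For (i): assume $|S\cap D_4|\ge 2$ and $|S\cap D_2|\ge 3$. Since $D_2\cap D_4=\emptyset$, choose two points from $S\cap D_4$ and three from $S\cap D_2$; these are five distinct points, which I arrange in regular form. By Lemma 1, two of the $D_2$-points are at $1$-angular distance $\ge 90^o$, the two $D_4$-points at $\ge\theta(0.245,\frac13)>79.5^o$, and a $D_2$-point together with a $D_4$-point at $\ge\theta(0.245,\frac12)>69.4^o$. In the cyclic order the two $D_4$-points are either adjacent, in which case the three $D_2$-points are consecutive and the sum of consecutive $1$-angular distances is $\ge 2\cdot 90^o+2\cdot 69.4^o+79.5^o$; or they are not, in which case the $D_2$-points split into blocks of sizes $2$ and $1$ and that sum is $\ge 90^o+4\cdot 69.4^o$. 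Both totals exceed $360^o$, contradicting (7).

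For (ii): assume $|S\cap D_4|\ge 3$, $|S\cap D_3|=2$ and $|S\cap D_2|\ge 2$. As $D_3\subseteq D_4$, the set $S$ meets $D_4\setminus D_3=\{0.245<z\le\frac14\}$ in at least one point $L$; take it together with the two points $M_1,M_2$ of $S\cap D_3$ and two points $H_1,H_2$ of $S\cap D_2$, and arrange these five in regular form. By Lemma 1 the minimal $1$-angular distances between points of the respective blocks are $\ge 90^o$ (two $H$'s), $\ge\arccos\frac16>80.405^o$ (two $M$'s), $\ge\arccos\frac13>70.528^o$ (an $H$ and an $M$), $\ge\theta(0.245,\frac12)>69.4^o$ (an $H$ and $L$), and $\ge\theta(0.245,\frac13)>79.5^o$ (an $M$ and $L$). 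There are exactly four cyclic orders of the labels $H,H,M,M,L$ up to rotation and reflection, and substituting the above bounds gives sums of consecutive $1$-angular distances of at least $389^o$ (the two $H$'s and the two $M$'s each consecutive), $3\arccos\frac13+\theta(0.245,\frac13)+\theta(0.245,\frac12)>360.4^o$, $2\arccos\frac13+\arccos\frac16+2\theta(0.245,\frac12)>360.2^o$, and $390^o$ respectively --- each exceeding $360^o$, which contradicts (7).

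The main obstacle is that part (ii) is genuinely tight: in the two cyclic orders in which the two $D_2$-points are not adjacent, the estimate comes out to only about $360.2^o$--$360.5^o$, so the inequalities $\theta(0.245,\frac12)>69.4^o$, $\theta(0.245,\frac13)>79.5^o$, $\arccos\frac13>70.528^o$ and $\arccos\frac16>80.405^o$ must be used in sharp form --- indeed it is precisely these estimates that force the particular cutoff $0.245$ in the definition of $D_4$. Everything else is routine: reducing to five points, enumerating the two, respectively four, cyclic patterns, and applying the monotonicity of Lemma 1.
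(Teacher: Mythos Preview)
Your proof is correct and follows essentially the same approach as the paper's: contradict inequality (7) by selecting five points in $D_2\cup D_4$ (respectively two from $D_3$, one from $D_4\setminus D_3$, two from $D_2$), bounding consecutive $1$-angular distances from below via Lemma~1, and checking that every cyclic arrangement yields a total exceeding $360^\circ$. You have simply made explicit the case enumeration and numerical verification that the paper leaves to the reader.
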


\begin{proof} 
Note that two points of $D_2$ have 1-angular distance $>90^o$ and two points of $D_4$ have 1-angular distance 
$ \ge \theta \left(\frac{1}{3},0.245 \right)>79.5^o$. A point of $D_4$ and a point of $D_2$ have 1-angular distance 
$ \ge \theta \left(\frac{1}{2},0.245 \right)>69.45^o$. Also a point of $D_4$ and a point of $D_3$ have 1-angular distance 
$ \ge \theta \left(\frac{1}{3},0.245 \right)>79.5^o$. 

If $S$ has two points in $D_4$ and 3 points in $D_2$, the sum of 1-angular distances is $>360^o$ in any case
(the reader may draw a sketch of the points around the circle; there are two cases, to have one pair of consecutive points
of $D_2$ or to have two pairs of this kind) and (i) of Lemma 3 holds.

The reasoning for (ii) is similar. If we place around the circle two points of $D_3$, one point of $D_4$ and two points
of $D_2$ and sum the 1-angular distances, in any case the sum is $>360^o$. 
\end{proof}

\begin{prop}
Let $S \subseteq S_X$ be 1-separated. If $|S \cap D_0|=10$, then $|S| \le 16$. 
\end{prop}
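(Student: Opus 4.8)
The plan is to use the partition $S_X = D_0 \cup D_1 \cup D'_1 \cup D_2 \cup D'_2$ to reduce the assertion to a single cardinality bound, and then to squeeze that bound out of Lemmas 2 and 3 together with an angular-sum estimate of the kind used in Proposition 2. Since $|S| = 10 + |S\cap D_1| + |S\cap D'_1| + |S\cap D_2| + |S\cap D'_2|$ and $|S\cap D_1|, |S\cap D'_1| \le 1$ by Proposition 1(i), it is enough to prove $|S\cap D_2| + |S\cap D'_2| \le 4$. Writing $p = |S\cap D_3|$, $q = |S\cap D'_3|$, $m = |S\cap\overline{D_0}|$ (so $p + q + m = 10$), I would first record the two auxiliary inequalities $|S\cap D_2| + p \le 4$ and $|S\cap D'_2| + q \le 4$: for $p = 4$ this is Lemma 2(i), for $p = 3$ it follows from Lemma 2(ii) together with $|S\cap D_2| \le 3$, and for $p \le 2$ one notes $D_3 \subseteq D_4$, so that by Lemma 3(i) $|S\cap D_2| = 3$ would force $|S\cap D_4| \le 1$ and hence $p \le 1$ --- thus $p = 2$ already yields $|S\cap D_2| \le 2$. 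The case of $D'$ is identical by the symmetry of the space.

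Next I would treat the cases $p \ge 3$ or $q \ge 3$ on the spot: if $p \ge 3$, Lemma 2 gives $|S\cap D_2| \le 1$, and with $|S\cap D'_2| \le 3$ (Proposition 1(ii)) we obtain $|S\cap D_2| + |S\cap D'_2| \le 4$; similarly for $q \ge 3$. So I may assume $p, q \le 2$, whence $m \ge 6$, and argue by contradiction, supposing $|S\cap D_2| + |S\cap D'_2| \ge 5$. By symmetry take $|S\cap D_2| = 3$ and $|S\cap D'_2| \ge 2$; the auxiliary inequality then gives $p \le 1$, so $m = 10 - p - q \ge 7$ (and $m \ge 8$ if moreover $|S\cap D'_2| = 3$, since then $q \le 1$ by the symmetric Lemma 3(i)). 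Finally $m \le 9$: two points of $\overline{D_0}$ are at $1$-angular distance $\ge \theta(-\frac{1}{4},\frac{1}{4}) = \arccos(\frac{7}{9}) > 38.94^o$, and $10 \cdot 38.94^o > 360^o$ would contradict (7).

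The remaining step, which I expect to be the main obstacle, is to rule out the surviving configuration: at least $7$ points in $\overline{D_0}$, exactly $3$ in $D_2$, at least $2$ in $D'_2$, at most $1$ in $D_3$ and at most $2$ in $D'_3$. I would place the $1$-separated set $S \cap (D_0 \cup D_2 \cup D'_2)$, which then has at least $15$ points, all with $|z| \le \frac{1}{2}$, in regular form and show, along the lines of the proof of the Claim inside Proposition 2, that $\sum_k \theta(z_k,z_{k+1}) > 2\pi$, contradicting (7). Concretely: bound from below each of the $m \ge 7$ arcs between consecutive points of $S\cap\overline{D_0}$ by means of Lemma 1 and the tabulated values of $\theta$; use that two points of $D_2$ (resp. two of $D'_2$) are at $1$-angular distance $\ge \theta(\frac{1}{3},\frac{1}{2}) = \theta(-\frac{1}{2},-\frac{1}{3}) = 90^o$; and split into cases according to how many points of $D_2 \cup D'_2 \cup D_3 \cup D'_3$ sit between consecutive points of $S\cap\overline{D_0}$ and according to the heights of their neighbours. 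The delicate point here is that a point near the corner $z = \frac{1}{2}$ can lie at $1$-angular distance close to $0$ from a point with $z \le 0$; the estimate must therefore also use that at most two points of $S\cap D_2$ can be that high (because $\theta(\frac{1}{2},\frac{1}{2}) = 180^o$) and the larger lower bounds on $\theta$ valid for the neighbouring heights that actually occur. Once the contradiction is reached, $|S\cap D_2| + |S\cap D'_2| \le 4$, and therefore $|S| \le 16$.
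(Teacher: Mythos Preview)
Your reduction down to the ``surviving configuration'' is correct and matches the paper's opening moves (the cases $p\ge3$ or $q\ge3$ are handled exactly as you do). The gap is in the final step. Your plan is to place the $\ge 15$ points of $S\cap(D_0\cup D_2\cup D'_2)$ in regular form and force $\sum\theta(z_k,z_{k+1})>2\pi$. But this cannot be made to work by angular sums alone: a point of $D_2$ with $z$ close to $\tfrac12$ can sit at $1$-angular distance arbitrarily close to $0$ from a neighbour in $\overline{D_0}$ or $D'_3$ or $D'_2$ with sufficiently negative $z$ (for instance $\theta(\tfrac12,-\tfrac14)=\theta(\tfrac12,0)=\theta(\tfrac12,-\tfrac12)=0$). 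You note this difficulty yourself, but the proposed remedies --- that at most two points of $S\cap D_2$ can be ``that high'', or that larger $\theta$-values hold for other neighbouring heights --- do not pin down enough: three $D_2$-points roughly $120^\circ$ apart, each flanked by a negative-$z$ neighbour, can make six consecutive arcs contribute essentially nothing. There is no uniform positive lower bound on those arcs, so the bookkeeping used in Proposition~2 collapses once $D_2$-points are admitted.

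The paper avoids this by keeping the angular-sum argument entirely inside $D_0$, where $\theta(z_1,z_2)\ge\theta(\tfrac13,-\tfrac13)>28.955^\circ$ for every pair. The new idea you are missing is the intermediate zone $D_4=\{0.245<z\le\tfrac13\}$ (and its mirror $D'_4$): an angular-sum count on the ten points of $S\cap D_0$ alone shows that, whenever $p,q\le2$, one of $|S\cap D_4|\ge2$ and $|S\cap D'_4|\ge2$, or $|S\cap D_4|\ge3$ with $p=2$, or the symmetric statement, must hold. This is exactly what feeds into Lemma~3 (parts (i) and (ii)) to give $|S\cap D_2|+|S\cap D'_2|\le4$. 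In your notation, once $|S\cap D_2|=3$ you actually get $|S\cap D_4|\le1$ (not merely $p\le1$), and the paper's claim then forces $|S\cap D'_4|\ge3$ and $q=2$, whence $|S\cap D'_2|\le1$ by Lemma~3(ii). So the fix is not to enlarge the angular-sum argument to $D_2\cup D'_2$, but to refine it within $D_0$ using $D_4$.
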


\begin{proof}
When $|S \cap D_3|=4$ or $|S \cap D'_3|=4$, then by Lemma 2 we have $S \cap D_2=\emptyset$ or $S \cap D'_2=\emptyset$
and as in Corollary 1 we get $|S| \le 10+2+3=15$.

When $|S \cap D_3|=3$ or $|S \cap D'_3|=3$, by Lemma 2 we have $|S \cap D_2|=1$ or $|S \cap D'_2|=1$
and as in Corollary 1 we get $|S| \le 10+2+1+3=16$.

\begin{Claim}
For $S \subseteq S_X$ 1-separated with $|S \cap D_0|=10$ such that $|S \cap D_3|<3$ and $|S \cap D'_3|<3$, 
we must have $|S \cap D_3|=2$ and $|S \cap D'_3|\ge 1$ or vice versa.

Also one of the following is valid
\begin{enumerate}[(i)]
\item $|S \cap D_4| \ge 2$ and $|S \cap D'_4|\ge 2$ or
\item if $|S \cap D'_4| \le 1$, then $|S \cap D_4| \ge 3$ and $|S \cap D_3|=2$ or
\item if $|S \cap D_4| \le 1$, then $|S \cap D'_4| \ge 3$ and $|S \cap D'_3|=2$.
\end{enumerate}
\end{Claim}

\begin{proof} (of Claim 2)
Suppose first that $|S \cap D_3|=|S \cap D'_3|=1$. Examining the cases for $k=0,1$ the number
of pairs of consecutive points, one from $S \cap D_3$ and one from $S \cap D'_3$, for the points of $S \cap D_0$
we find a sum of 1-angular distances
\[ \ge k \cdot \theta \left(\frac{1}{3},-\frac{1}{3} \right)+2(2-k) \cdot \theta \left(\frac{1}{3},-\frac{1}{4} \right)+
(6+k) \cdot \theta \left(-\frac{1}{4},\frac{1}{4} \right)\]
\[>k \cdot 28.955^o+2(2-k) \cdot 33.557^o +(6+k) \cdot 38.94^o>360^o\]
for $k=0,1$, so this is impossible.
 
Also if $|S \cap D_3|=2$ and $S \cap D'_3=\emptyset$ (or vice versa), we have a sum of 1-angular distances
\[ \ge 4 \cdot \theta \left(-\frac{1}{4},\frac{1}{3} \right)+6 \cdot \theta \left(\frac{1}{4},-\frac{1}{4} \right)
>4 \cdot 33.557^o +6 \cdot 38.94^o=367.868^o.\] 

Supposing that $|S \cap D_3|=1$ and $S \cap D'_3=\emptyset$ (or vice versa) would result to an even larger
sum of 1-angular distances, hence the first part of the Claim holds.

For the second part, let $|S \cap D_4|=2$ and $|S \cap D'_4|=1$ (or vice versa). Since 
\[\theta \left(-\frac{1}{3},0.245 \right)>33.78^o \; \textrm{and} \;\theta \left(-0.245,0.245 \right)>39.47^o,\]
the sum of 1-angular distances of $S \cap D_0$ is
\[ \ge k \cdot \theta \left(\frac{1}{3},-\frac{1}{3} \right)+2(3-k) \cdot \theta \left(-\frac{1}{3},0.245 \right)+
(4+k) \cdot \theta \left(-0.245,0.245 \right)\]
\[>k \cdot 28.955^o+2(3-k) \cdot 33.78^o +(4+k) \cdot 39.47^o>360^o\]
for $k=0,1,2$ the number of pairs of consecutive points, one from $S \cap D_3$ and one from $S \cap D'_3$.
So the second part of the Claim is also valid. 
\end{proof}

When (i) of Claim 2 is valid, because of Lemma 3(i) we can have at most two points in $S \cap D_2$ and two points in $S \cap D'_2$,
so as in Corollary 1 we have $|S| \le 10+2+2+2=16$. In the other two cases of Claim 2, because of Lemma 3(ii) we have $|S| \le 10+2+3+1=16$. It is now clear that Proposition 3 is valid.
\end{proof}

In order to resolve the case $|S \cap D_0|=9$ we need the following:

\begin{lemm}
Let $\frac{1}{2}>z_1\ge 0\ge z_2>-\frac{1}{2}$, $r_1=1-z_1$ and $r_2=1+z_2$. For any 
$z \in \left[-\frac{1}{2},\frac{1}{2}\right]$ with $z_1 \ge z \ge z_2$ the sum of 1-angular distances
$\theta(z_1,z)+\theta(z,z_2)$ is minimum when $z=0$. That is, for 3 points with decreasing (or increasing)
with respect to their $z$-coordinate order, the sum of consecutive 1-angular distances is minimum when
the middle point has $z=0$.
\end{lemm}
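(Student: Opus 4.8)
The plan is to fix $z_1,z_2$ and study the single‑variable function $f(z)=\theta(z_1,z)+\theta(z,z_2)$ on the interval $z\in[z_2,z_1]$, showing that it is (strictly) increasing on $[0,z_1]$ and (strictly) decreasing on $[z_2,0]$; this forces its minimum to be at $z=0$. First I would use symmetry: the reflection $z\mapsto -z$ is an isometry of $X$ carrying the pair $(z_1,z_2)$ to $(-z_2,-z_1)$, again of the form ``$\ge 0\ge$'', and carrying $f$ to a function of exactly the same shape; hence ``decreasing on $[z_2,0]$'' for the given data is ``increasing on $[0,\,-z_2]$'' for the reflected data, and it suffices to treat $z\in[0,z_1]$. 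On that interval $z_1\ge z\ge 0$, so $\theta(z_1,z)$ is governed by formula (3), while $z\ge 0\ge z_2$, so $\theta(z,z_2)$ is governed by formula (4) (the side conditions $z-z_2<1$ hold automatically since $z_1<\tfrac12$ and $z_2>-\tfrac12$).

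Next I would substitute $r=1-z$, which runs over $[r_1,1]$ as $z$ runs over $[0,z_1]$, and write $f=\arccos g_1(r)+\arccos g_2(r)$ with
\[
g_1(r)=1-\frac{2(r_1-r)+1}{2r_1 r},\qquad g_2(r)=\frac{2(r+r_2)-1}{2r_2 r}-1 .
\]
The derivatives are the ones already computed in the proof of Lemma~1: $g_1'(r)=\dfrac{2r_1+1}{2r_1 r^2}>0$ and $g_2'(r)=-\dfrac{2r_2-1}{2r_2 r^2}<0$ (here $r_2>\tfrac12$). Since $\dfrac{d}{dz}\arccos g_i(1-z)=\dfrac{g_i'(r)}{\sqrt{1-g_i^2}}$, one gets $\dfrac{df}{dz}=\dfrac{g_1'(r)}{\sqrt{1-g_1^2}}+\dfrac{g_2'(r)}{\sqrt{1-g_2^2}}$. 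As the first summand is positive and the second negative, the inequality $\dfrac{df}{dz}\ge 0$ is equivalent, after isolating the two terms on opposite sides and squaring, to $(g_1')^2(1-g_2^2)\ge (g_2')^2(1-g_1^2)$.

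To clinch this I would factor $1-g_i^2=(1-g_i)(1+g_i)$ via
\[
1+g_1=\frac{(2r_1+1)(2r-1)}{2r_1 r},\ \ 1-g_1=\frac{2r_1-2r+1}{2r_1 r},\ \ 1-g_2=\frac{(2r_2-1)(2r-1)}{2r_2 r},\ \ 1+g_2=\frac{2r+2r_2-1}{2r_2 r},
\]
all four factors being positive because $\tfrac12<r\le1$, $\tfrac12<r_1\le1$, $\tfrac12<r_2\le1$. Plugging these and the $g_i'$ into $(g_1')^2(1-g_2^2)\ge(g_2')^2(1-g_1^2)$ and cancelling the common positive factor $\dfrac{(2r-1)(2r_1+1)(2r_2-1)}{16\,r_1^2 r_2^2 r^6}$ reduces the whole thing to
\[
(2r_1+1)(2r+2r_2-1)\ \ge\ (2r_2-1)(2r_1-2r+1),
\]
and the difference of the two sides works out to exactly $4r(r_1+r_2)>0$. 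Hence $\tfrac{df}{dz}>0$ on $(0,z_1)$, so $f$ is strictly increasing there; by the symmetric argument it is strictly decreasing on $(z_2,0)$, and therefore attains its minimum over $[z_2,z_1]$ at $z=0$. The only real difficulty is organizational: correctly pairing each term with the right formula among (3)–(4) on each side of $0$, and carrying out the factorizations of $1-g_i^2$ so that the ungainly derivative inequality collapses to the trivial $4r(r_1+r_2)\ge 0$; the sign step (why squaring is legitimate) is what makes the two opposing monotonicities combine in our favour, and no further estimate is needed.
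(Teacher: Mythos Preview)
Your proof is correct and follows essentially the same route as the paper's: reduce to $z\ge 0$ by symmetry, set $r=1-z$, differentiate $\arccos g_1+\arccos g_2$, and reduce the sign question to an elementary polynomial inequality valid for $r>\tfrac12$. The only difference is presentational: you factor $1-g_i^2=(1-g_i)(1+g_i)$ explicitly and cancel the common factor $(2r-1)(2r_1+1)(2r_2-1)$ before comparing, arriving at $4r(r_1+r_2)>0$, whereas the paper expands $(g_1')^2(1-g_2^2)-(g_2')^2(1-g_1^2)$ directly and identifies it (up to a positive denominator) with $Ar^2-\tfrac{A}{2}r=\tfrac{A}{2}r(2r-1)$ for $A=8(2r_1+1)(2r_2-1)(r_1+r_2)$; these are the same computation, yours with the $(2r-1)$ already stripped out.
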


\begin{proof}
Since the reflection with respect to the $xy$-plane is an isometry of $X$, we may suppose that $z \ge 0$ and $r=1-z$.
From (3) and (4) we have 
\[\theta(z_1,z)+\theta(z,z_2)=\arccos \left( 1-\frac{2(r_1-r)+1}{2r_1r} \right)+
\arccos \left( \frac{2(r+r_2)-1}{2r_2r}-1 \right), \; \frac{1}{2}<r \le 1.\]
Let
\[\alpha(r)=1-\frac{2(r_1-r)+1}{2r_1r} \Rightarrow \alpha'(r)=\frac{2r_1+1}{2r_1r^2}>0\]
and let 
\[\beta(r)=\frac{2(r+r_2)-1}{2r_2r}-1 \Rightarrow \beta'(r)=\frac{1-2r_2}{2r_2r^2}<0 \; \;
\left(\frac{1}{2}<r_2 \le 1 \right).\]
Let now 
\[f(r)=\arccos(\alpha(r))+\arccos (\beta(r)) \Rightarrow\]
\[f'(r)=-\frac{\alpha'(r)}{\sqrt{1-\alpha^2(r)}}-\frac{\beta'(r)}{\sqrt{1-\beta^2(r)}}=
-\frac{\alpha'(r)\sqrt{1-\beta^2(r)}+\beta'(r)\sqrt{1-\alpha^2(r)}}{\sqrt{1-\alpha^2(r)}\sqrt{1-\beta^2(r)}}.\]
We will show that $f'(r)<0$, $\frac{1}{2}<r \le 1$, hence $f(r)$ is minimum for $r=1 \Leftrightarrow z=0$.
It suffices to show that 
\[\alpha'(r)\sqrt{1-\beta^2(r)}+\beta'(r)\sqrt{1-\alpha^2(r)}>0.\]
If we replace the values of $\alpha(r),\beta(r),\alpha'(r),\beta'(r)$ and do some operations, 
we reach an inequality of the form
\[Ar^2+Br>0\]
where $A=8(2r_1+1)(2r_2-1)(r_1+r_2)$ and $B=-\frac{A}{2}$, so 
\[Ar^2-\frac{A}{2}r>0\]
which holds for $r>\frac{1}{2}$ and hence $f(r)$ is a decreasing function.
\end{proof}

Let now 
\begin{equation*}
\begin{aligned} 
&D_5=\left\{(x,y,z) \in S_X:0.224<z \le \frac{1}{3}\right\} \; \textrm{and}
&&D'_5=\left\{(x,y,z) \in S_X:-\frac{1}{3} \le z <-0.224 \right\}.
\end{aligned}
\end{equation*}

\begin{lemm}
Let $S \subseteq S_X$ be 1-separated.
When $|S \cap D_3| \ge 1$ and $|S \cap D_5| \ge 2$, then $|S \cap D_2| \le 2$ (repsectively when $|S \cap D'_3| \ge 1$ 
and $|S \cap D'_5| \ge 2$, then $|S \cap D'_2| \le 2$).
\end{lemm}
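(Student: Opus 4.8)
The plan is to argue by contradiction, along the lines of the proofs of Lemmas 2 and 3. Suppose $S\subseteq S_X$ is $1$-separated with $|S\cap D_3|\ge 1$, $|S\cap D_5|\ge 2$ but $|S\cap D_2|\ge 3$. Since $D_3\subseteq D_5$ and $D_2\cap D_5=\emptyset$, I would choose three distinct points $a_1,a_2,a_3\in S\cap D_2$, a point $b_0\in S\cap D_3\subseteq S\cap D_5$, and a further point $b_1\in(S\cap D_5)\setminus\{b_0\}$. These five points are distinct, have $z$-coordinates in $[0.224,\tfrac12]\subseteq[-\tfrac12,\tfrac12]$, and none of them is $\pm e_3$; hence $\{a_1,a_2,a_3,b_0,b_1\}$ may be put in regular form, and the subsequence version of inequality (7) in Remarks 1(3) applies: the cyclic sum of the $1$-angular distances of its consecutive points is $\le 2\pi$. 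I will contradict this by showing the sum exceeds $2\pi$.

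First I would record the needed lower bounds, all from Lemma 1 together with formulas (3)--(4): any two points of $D_2$ are at $1$-angular distance $>\theta(\tfrac12,\tfrac13)=90^{\circ}$; a point of $D_2$ and a point of $D_5$ are at $1$-angular distance $\ge\theta(\tfrac12,0.224)>64.9^{\circ}$; two points of $D_5$ are at $1$-angular distance $\ge\theta(\tfrac13,0.224)>75.8^{\circ}$; and, crucially, a point of $D_2$ and the point $b_0\in D_3$ are at $1$-angular distance $\ge\theta(\tfrac12,\tfrac14)=\arccos\tfrac13>70.528^{\circ}$.

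Next I would split into the two cyclic patterns (up to rotation) of the three ``$D_2$-points'' $a_i$ and the two ``$D_5$-points'' $b_j$ on the circle. If $b_0,b_1$ are consecutive, the five consecutive gaps are of types $AA,AA,AB,BB,BA$ and the cyclic sum is $>2\cdot 90^{\circ}+2\cdot 64.9^{\circ}+75.8^{\circ}>385^{\circ}$. If $b_0,b_1$ are not consecutive, then (there being only five points) each $b_j$ has two $D_2$-neighbours and exactly one gap is of type $D_2$--$D_2$, so, using the refined bound on the two gaps incident to $b_0$, the cyclic sum is $>90^{\circ}+2\cdot 70.528^{\circ}+2\cdot 64.9^{\circ}>360^{\circ}$. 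In either case the sum exceeds $2\pi$, contradicting (7); hence $|S\cap D_2|\le 2$. The statement for $S\cap D'_2$ follows by applying the reflection $z\mapsto -z$, which is an isometry of $X$.

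The delicate point — and the reason both the auxiliary region $D_5$ with the particular threshold $0.224$ and the hypothesis $|S\cap D_3|\ge1$ are needed — is exactly the ``non-consecutive'' case: without the improvement $\theta(\tfrac12,\tfrac14)>70.528^{\circ}$ on the two gaps meeting $b_0$, those five gaps would only sum to roughly $90^{\circ}+4\cdot 64.9^{\circ}\approx 350^{\circ}<360^{\circ}$ and the argument would collapse. So the real work is to verify the numerical estimates $\theta(\tfrac12,0.224)>64.9^{\circ}$ and $\theta(\tfrac13,0.224)>75.8^{\circ}$ with sufficient accuracy from (3), and to confirm that in the non-consecutive pattern there is indeed exactly one $D_2$--$D_2$ gap while both gaps at $b_0$ are of the favourable $D_2$--$D_3$ type.
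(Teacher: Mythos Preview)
Your proposal is correct and follows essentially the same approach as the paper: choose one point of $D_3$, one further point of $D_5$, and three points of $D_2$, then contradict inequality (7) by showing the cyclic sum of $1$-angular distances exceeds $360^{\circ}$ via the bounds $\theta(\tfrac12,\tfrac13)=90^{\circ}$, $\theta(\tfrac12,0.224)>64.99^{\circ}$, $\theta(\tfrac13,0.224)>75.82^{\circ}$, and $\theta(\tfrac12,\tfrac14)>70.528^{\circ}$. The paper leaves the case split implicit (``in any case the sum is $>360^{\circ}$''), whereas you make explicit the two cyclic patterns and correctly identify that the non-consecutive case is the tight one requiring the $D_3$ hypothesis.
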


\begin{proof} 
A point of $D_2$ and a point of $D_5$ have 1-angular distance $ \ge \theta \left(\frac{1}{2},0.224 \right)>64.99^o$. Also a point
of $D_3$ and a point of $D_5$ have 1-angular distance $ \ge \theta \left(\frac{1}{3},0.224 \right)>75.82^o$. Recall that the 1-angular distance between a point of $D_3$ and a point of $D_2$ is $ \ge \theta \left(\frac{1}{2},\frac{1}{4} \right)>70.528^o$. 

If we place around the circle one point of $D_3$, one point of $D_5$ and 3 points of $D_2$ and sum the 1-angular distances, we
see that in any case the sum is $>360^o$. So the proof of the Lemma is complete. 
\end{proof}

\begin{prop}
Let $S \subseteq S_X$ be 1-separated. If $|S \cap D_0|=9$, then $|S| \le 16$.
\end{prop}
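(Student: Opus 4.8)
The plan is to run the same kind of counting argument used for Proposition~3, but now with one fewer point available in $D_0$. Since $|S\cap D_0|=9$ and we already know (Corollary~1 plus Proposition~1) that $|S\cap(D_1\cup D_1')|\le 2$, $|S\cap D_2|\le 3$, $|S\cap D_2'|\le 3$, the trivial bound is $|S|\le 9+2+3+3=17$; so the whole content of the proposition is to rule out the configurations that would give $17$, i.e. to show one cannot simultaneously have $|S\cap D_2|=3$ and $|S\cap D_2'|=3$, and more generally that $|S\cap D_2|+|S\cap D_2'|\le 5$. First I would split into cases according to the sizes of $|S\cap D_3|$ and $|S\cap D_3'|$ (each $\le 4$ by Proposition~1(ii) applied in the slab, as in Proposition~2). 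If $|S\cap D_3|=4$, Lemma~2(i) forces $S\cap D_2=\emptyset$, so $|S|\le 9+2+0+3=14$; if $|S\cap D_3|=3$, Lemma~2(ii) gives $|S\cap D_2|\le 1$, so $|S|\le 9+2+1+3=15$ — and symmetrically for the primed side. So the only surviving case is $|S\cap D_3|\le 2$ and $|S\cap D_3'|\le 2$.

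In that remaining case the argument becomes a nine-point analogue of Claim~2: with only $9$ points in $D_0$, of which at most $2+2=4$ lie in $D_3\cup D_3'$, at least $5$ lie in $\overline{D_0}$, which is a much thicker middle slab than the $11$-point situation, so the $1$-angular distances of consecutive points in $\overline{D_0}$ are comfortably large. I would first establish, exactly as in Claim~2, that unless $|S\cap D_3|=|S\cap D_3'|=2$ one already gets a sum of consecutive $1$-angular distances exceeding $360^\circ$ — the estimates $\theta(\tfrac13,-\tfrac13)>28.955^\circ$, $\theta(\tfrac13,-\tfrac14)>33.557^\circ$, $\theta(-\tfrac14,\tfrac14)>38.94^\circ$ are what drive this, and Lemma~4 lets me always assume the $z=0$ "slot" is used efficiently. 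Then, with $|S\cap D_3|=|S\cap D_3'|=2$ and the five remaining points in $\overline{D_0}$, I would invoke Lemma~3 and Lemma~5 (the $D_4$- and $D_5$-refinements): if the two $D_3$-points actually lie in $D_5$ (resp. $D_4$) then the stronger bounds $|S\cap D_2|\le 2$ follow, and similarly on the primed side, giving $|S|\le 9+2+2+2=15$; the borderline sub-case is when a $D_3$-point sits in $D_3\setminus D_5$, i.e. $z\in(\tfrac14,0.224]$, and here I would squeeze out the contradiction by the same "place the points around the circle and sum" computation, using that two points of $D_2$ are $>90^\circ$ apart and a $D_3$-point and a $D_2$-point are $>70.528^\circ$ apart (Lemma~1), which already forces the total past $360^\circ$ once three points of $D_2$ are present together with the two $D_3$-points and the corresponding primed points.

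The main obstacle is the bookkeeping in the surviving case $|S\cap D_3|=|S\cap D_3'|=2$: one must enumerate how the (up to three) points of $S\cap D_2$, the two points of $S\cap D_3$, the five points of $S\cap\overline{D_0}$, and the mirror-image points on the negative side are interleaved around the circle in regular form, and check in each interleaving pattern that $\sum\theta(z_k,z_{k+1})>360^\circ$. Lemma~4 is the key device that tames this, since it says the cheapest way to spend angle across a monotone triple is to put the middle point at $z=0$, so I only ever need to consider the "extremal" placements; combined with the numerical table of $1$-angular distances (and the refined values for $z=0.245$, $z=0.224$ from Lemmas~3 and~5) every pattern is dispatched by a one-line inequality. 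Once all patterns give a sum $>2\pi$, Remark~1(4) yields that $S$ is not $1$-separated, contradicting the hypothesis, and therefore $|S\cap D_2|+|S\cap D_2'|\le 5$, whence $|S|\le 9+2+5=16$, which is Proposition~4.
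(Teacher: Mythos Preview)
Your case split goes astray at the step where you declare the ``surviving case'' to be $|S\cap D_3|=|S\cap D_3'|=2$. Recall that $D_3=\{1/4<z\le 1/3\}\subset D_4=\{0.245<z\le 1/3\}\subset D_5=\{0.224<z\le 1/3\}$, so $D_3\setminus D_5$ is \emph{empty}; your ``borderline sub-case $z\in(\tfrac14,0.224]$'' does not exist. More importantly, once $|S\cap D_3|\ge 2$ you automatically have $|S\cap D_4|\ge 2$, and Lemma~3(i) immediately gives $|S\cap D_2|\le 2$, hence $|S|\le 9+2+2+3=16$. This is the \emph{easy} case, dispatched in one line in the paper.

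The genuine difficulty is exactly the case you tried to dismiss by an angular-sum argument: $|S\cap D_3|\le 1$ and $|S\cap D_3'|\le 1$. Your assertion that this already forces $\sum\theta>360^\circ$ is not correct. For instance with $|S\cap D_3|=2$, $|S\cap D_3'|=1$ and six points in $\overline{D_0}$, the crude lower bound using only $\theta(\tfrac13,-\tfrac13),\theta(\tfrac13,-\tfrac14),\theta(-\tfrac14,\tfrac14)$ is well below $360^\circ$, and a single application of Lemma~4 does not close the gap. What the paper actually proves (Claim~3) is more delicate: assuming $|S\cap D_3|=|S\cap D_3'|=1$ \emph{and} all other $D_0$-points have $|z|\le 0.224$, one uses Lemma~4 on a monotone triple together with the sharper values $\theta(0.224,0)>49.88^\circ$ and $\theta(0.224,-0.224)>41.66^\circ$ to push the sum past $360^\circ$. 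The conclusion is not that the nine points cannot exist, but that one side must satisfy $|S\cap D_3|\ge 1$ together with $|S\cap D_5|\ge 2$; then Lemma~5 gives $|S\cap D_2|\le 2$ on that side and one finishes with $9+2+2+3=16$. So the route through Lemmas~3--5 is right, but you have the inclusions among $D_3,D_4,D_5$ reversed and consequently the easy and hard cases interchanged.
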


\begin{proof}
When $|S \cap D_3| \ge 2$, then also $|S \cap D_4| \ge 2$, so $|S \cap D_2| \le 2$ (see Lemma 3(i)).
The same holds for $S \cap D'_3, S \cap D'_4$ and $S \cap D'_2$. So when $|S \cap D_3| \ge 2$ or $|S \cap D'_3| \ge 2$,
as in Corollary 1 we get $|S| \le 9+2+3+2=16$.

\begin{Claim}
For $S \subseteq S_X$ 1-separated with $|S \cap D_0|=9$ one of the following holds:
\begin{enumerate}[(i)]
\item $|S \cap D_3| \ge 1$ and $|S \cap D_5| \ge 2$ or 
\item $|S \cap D'_3| \ge 1$ and $|S \cap D'_5| \ge 2$
\end{enumerate} 
\end{Claim}

\begin{proof} (of Claim 3)
Suppose that $|S \cap D_3|=|S \cap D'_3|=1$ and that the rest of the points of $S \cap D_0$ have $z$-coordinate
in $[-0.224,0.224]$. These last points have 1-angular distance from one another $\ge \theta \left(-0.224,0.224 \right)>41.66^o$,
from points of $D_3$ (or $D_5$) $\ge \theta \left(-\frac{1}{3},0.224 \right)>34.69^o$ and from points with zero $z$-coordinate
1-angular distance $\ge \theta \left(0.224,0 \right)>49.88^o$. We will also use the fact that $\theta \left(\frac{1}{3},0 \right)>41.4096^o$.

When $|S \cap D_0|=9$, observing its regular form we see that around the circle there is at least one triplet of consecutive 
points in increasing or decreasing (not strictly) with respect to their $z$-coordinate order. According to Lemma 4 the minimum sum
of 1-angular distances in such a triplet occurs when the middle point has $z$-coordinate $=0$. Under our assumption we have 
two cases.

The first one is when the points of $S \cap D_3$ and $S \cap D'_3$ are not consecutive. In this case the sum of 1-angular 
distances is 
\[ \ge k \cdot \theta \left(\frac{1}{3},0 \right)+(2-k) \cdot \theta \left(0.224,0 \right)+(4-k) \cdot 
\theta \left(-\frac{1}{3},0.224 \right)+(k+3) \cdot \theta (0.224,-0.224)\]
\[>k \cdot 41.4096^o+(2-k) \cdot 49.88^o +(4-k) \cdot 34.69^o+(k+3) \cdot 41.66^o>360^o\]
for $k=0,1,2$ the number of points of $S \cap (D_3 \cup D'_3)$ which are first or last in the triplet.

The second case is when the points of $S \cap D_3$ and $S \cap D'_3$ are consecutive. Then we have a sum of 1-angular distances
\[ \ge \theta \left(\frac{1}{3},-\frac{1}{3} \right)+ k \cdot \theta \left(\frac{1}{3},0 \right)+(2-k) \cdot 
\theta \left(0.224,0 \right)+(2-k) \cdot \theta \left(-\frac{1}{3},0.224 \right)+(4+k) \cdot \theta (0.224,-0.224)\]
\[>28.955+k \cdot 41.4096^o+(2-k) \cdot 49.88^o +(2-k) \cdot 34.69^o+(4+k) \cdot 41.66^o>360^o\]
for $k=0,1$ the number of points of $S \cap (D_3 \cup D'_3)$ which are first or last in the triplet.

Clearly, supposing that $|S \cap (D_3 \cup D'_3)| \le 1$ would result to even larger sums of 1-angular distances and the proof 
of Claim 3 is complete.
\end{proof}

Because of Claim 3 and Lemma 5, as in Corollary 1 we have that $|S| \le 9+2+3+2=16$. 

\end{proof}

\begin{theo}
For the Hadwiger number of Petty space the following bounds are established
\[14 \le H'(X)\le H(X) \le 16.\]
\end{theo}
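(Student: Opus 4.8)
The plan is to combine the lower bound furnished by Example 1 with an upper bound obtained by gluing together Propositions 1--4 through a short case analysis, so that essentially no new work is needed.

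First, for the two left-hand inequalities: Example 1 already produces a $1^+$-separated subset of $S_X$ of cardinality $14$, so $H'(X) \ge 14$; and $H'(X) \le H(X)$ is immediate, since every $1^+$-separated set is in particular $1$-separated. Hence the whole task reduces to establishing $H(X) \le 16$.

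For that, I would take a $1$-separated set $S \subseteq S_X$ with $|S| = H(X)$ and use the disjoint decomposition $S_X = D_0 \cup D_1 \cup D'_1 \cup D_2 \cup D'_2$. Proposition 1 gives $|S \cap D_1| \le 1$, $|S \cap D'_1| \le 1$, $|S \cap D_2| \le 3$ and $|S \cap D'_2| \le 3$, while Proposition 2 gives $|S \cap D_0| \le 10$. I would then split according to the value of $|S \cap D_0|$. If $|S \cap D_0| \le 8$, then
\[ |S| \le 8 + 1 + 1 + 3 + 3 = 16. \]
If $|S \cap D_0| = 9$, then $|S| \le 16$ by Proposition 4. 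If $|S \cap D_0| = 10$, then $|S| \le 16$ by Proposition 3. Since $|S \cap D_0| \le 10$ these three cases are exhaustive, so $H(X) = |S| \le 16$ in all of them, which finishes the proof.

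The genuinely hard part is not in this assembly: it was already carried out inside Propositions 3 and 4, whose proofs perform the delicate accounting of $1$-angular distances around the polar circle (via Lemma 1, Lemma 4 and the auxiliary Lemmas 2, 3 and 5). The only points requiring care at this stage are that the case split on $|S \cap D_0|$ is genuinely exhaustive — guaranteed by Proposition 2 — and that the coarse estimate $8+1+1+3+3$ in the first case cannot be improved against $S$, which is immediate from Proposition 1. Thus the theorem follows by bookkeeping once Propositions 1--4 are in hand.
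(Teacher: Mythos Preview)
Your proposal is correct and follows essentially the same argument as the paper: Example 1 for the lower bound, then the zone decomposition $S_X = D_0 \cup D_1 \cup D'_1 \cup D_2 \cup D'_2$ together with Propositions 1 and 2, and a case split on $|S \cap D_0| \in \{ \le 8,\ 9,\ 10\}$ handled respectively by the coarse count $8+1+1+3+3=16$ and by Propositions 4 and 3. The paper's proof is exactly this assembly, so nothing further is needed.
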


\begin{proof}
The lower bound was shown in Example 1. Recall the partition of the unit sphere $S_X$ into certain zones (see Corollary 1). 
If $S \subseteq S_X$ is 1-separated, from Prop.1 we have that $|S \cap D_1| \le 1$ and $|S \cap D'_1| \le 1$;
also $|S \cap D_2| \le 3$ and $|S \cap D'_2| \le 3$. 

Proposition 2 implies that $|S \cap D_0| \le 10$ and from Propositions 3,4 we have that when $|S \cap D_0|=9$ or $10$,
then $|S| \le 16$. Also if $|S \cap D_0| \le 8$, by the partition of Corollary 1 and Proposition 1 we get that
$|S| \le 8+2+3+3=16$ and the upper bound is also established.
\end{proof}

We believe that a refinement of our method can yield $H(X) \le 15$ but the presentation of the details of such an 
analysis would be significantly longer. Regarding the Hadwiger number of Petty space the following is likely to hold

\begin{Conjecture}
Prove that $H(X)=H'(X)=14$. 
\end{Conjecture}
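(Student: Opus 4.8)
We sketch a possible route to a proof of the Conjecture, continuing the analysis of Section~1. The lower bound $H'(X)\ge 14$ is Example~1 (note that the extremal configuration there has eight of its points in $D_0$), and $H'(X)\le H(X)$ is trivial, so the whole issue is to prove $H(X)\le 14$. Let $S\subseteq S_X$ be $1$-separated and put $m=|S\cap D_0|$. By Proposition~1 we have $|S\cap D_1|\le 1$, $|S\cap D'_1|\le 1$, $|S\cap D_2|\le 3$ and $|S\cap D'_2|\le 3$, hence $|S|\le m+8$; consequently $m\le 6$ already gives $|S|\le 14$, and by Proposition~2 only the cases $m\in\{7,8,9,10\}$ remain.

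For $m=9$ and $m=10$ the plan is to sharpen Propositions~2, 3 and~4. When $|S\cap D_0|$ is large, the caps $D_3$ and $D'_3$ (or the finer zones $D_4,D_5,D'_4,D'_5$) are forced to be almost full, and then Lemma~2 (and Lemmas~3 and~5) applied on \emph{both} sides simultaneously, rather than on one side as in the published proofs, should force $S\cap(D_2\cup D'_2)$ to be small and, in the densest subcases, force one of the polar caps $D_1,D'_1$ to be empty. I expect this to yield $|S|\le 14$ for $m=9$ and in fact $|S|\le 13$ for $m=10$; this is a refinement of the same angular-sum bookkeeping with the tabulated $1$-angular distances, and I do not expect it to be the main difficulty.

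The heart of the matter is $m\in\{7,8\}$, and especially $m=8$, where the extremal set of Example~1 shows that $|S\setminus D_0|=6$ is attained, so \emph{nothing may be wasted}: one must prove exactly $|S\setminus D_0|\le 6$ when $m=8$ (and $\le 7$ when $m=7$). For this I would prepare: (a) an extension of the monotonicity Lemma~1 and of the ``middle point at $z=0$'' minimization Lemma~4 to the full admissible range of $z$-coordinates, in particular into the polar caps, so that a point of $D_1\setminus\{\pm e_3\}$ may be inserted into the circular-sum inequality~(7) with an honest lower bound for its $1$-angular distance to the points with $z\le\tfrac12$; (b) a separate treatment of $\pm e_3$, using that $e_3$ is $1$-separated from a point $(r,\theta,z)\in S_X$ precisely when $z\le\tfrac12$, so that $e_3\in S$ turns the problem into bounding $|S\cap(D_2\cup D'_2)|$ in terms of $m$; (c) several additional thin sub-zones of $D_2$ near $z=\tfrac12$ and of $D_3$ near $z=\tfrac13$, together with sharpened versions of Lemmas~2, 3 and~5 of the form ``many points of $S$ high up $\Rightarrow$ few points of $S$ in $D_2$''; and (d) for $m=8$, a direct verification that over every way of interleaving the at most $8+3+3$ circle points of $S\cap(D_0\cup D_2\cup D'_2)$ (with a possible cap point also inserted via (a)) the sum in~(7) exceeds $2\pi$ unless $|S\setminus D_0|\le 6$, the monotone triples being handled optimally by Lemma~4.

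The main obstacle will be step~(d) for $m=8$: because the extremal configuration is attained, the numerical margins in the $\arccos$-inequalities are essentially zero, so the sub-zone boundaries in~(c) must be chosen with great care, and the number of interleaving patterns, compounded by the possible presence of one or two polar-cap points, is large. I expect this step to require either a painstaking hand analysis or a rigorous computer-assisted enumeration of the patterns together with interval-arithmetic certification of the finitely many trigonometric inequalities that arise. A secondary, more technical point is to make the extensions in~(a) uniform: the sign of the derivatives computed in the proof of Lemma~1 reverses when $r<\tfrac12$ (that is, inside $D_1$), so the monotonicity must be recast, for instance as a lower bound $\theta(z_1,z_2)\ge\theta\big(\min(z_1,\tfrac12),\min(z_2,\tfrac12)\big)$ valid across both hemispheres, before it can be fed into the circular-sum argument.
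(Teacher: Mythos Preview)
The paper does not prove this statement: it is explicitly stated as an open Conjecture immediately after Theorem~1, and the paper only establishes $14\le H'(X)\le H(X)\le 16$. The authors themselves remark that a refinement of their method could likely reach $H(X)\le 15$ (not $14$), and that even this would be ``significantly longer''. So there is no proof in the paper to compare your proposal against.

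Your proposal is not a proof either, and you say so: it is a plan of attack with explicitly acknowledged gaps. As a plan it is a sensible continuation of the paper's zonal and angular-sum bookkeeping, and your identification of $m=|S\cap D_0|=8$ as the critical case is correct (the extremal set of Example~1 does have exactly eight points in $D_0$ and six outside it). But several steps are purely aspirational. Your expectation that the $m=9$ and $m=10$ cases can be sharpened from $|S|\le 16$ all the way down to $14$ and $13$ goes beyond what the authors themselves claim is within reach of the method. More seriously, for $m=8$ you concede that the numerical margins are ``essentially zero'' and that step~(d) may require computer-assisted enumeration with interval arithmetic; this is precisely the admission that you do not currently have a proof, nor a guarantee that the zonal decomposition can be made to close the gap at all. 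The Conjecture remains open, and your sketch is a reasonable description of the obstacles a proof along these lines would have to overcome, not a proof.
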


\section{A space whose equilateral sets of maximum cardinality do not have a center}
Let $X$ be the 3-dimensional Petty space with $||(x,y,z)||=\sqrt{x^2+y^2}+|z|$. Recall that the maximum cardinality
of an equilateral set in $X$ is $e(X)=5$.
There is an infinite number of configurations of 5 points yielding an equilateral set
(equivalently of 5 double cone balls with the same radius touching each other), see also
Remark 3.

\begin{notation}
We denote by $A=\{\alpha_i:i=1,2,\dots,5\}$ a 5-point 1-equilateral set in $X$.
We also denote by $\alpha'_i, i=1,2,\dots,5$ the $z$-coordinate of the corresponding point
and we suppose that $\alpha'_5 \le \alpha'_3 \le \alpha'_2 \le \alpha'_1 \le \alpha'_4$.
Since the translation and the rotation around the $z$-axis are isometries, 
we may always consider that $\alpha_5=(0,0,0)$ and $\alpha_4=(0,d,1-d)$, 
where $d$ is the "submersion" of the $\alpha_4$-centered
cone with respect to the highest possible $z$-coordinate position $\alpha'_4=1$. 
\end{notation}

\begin{figure}[htbp]
\centering
\includegraphics[width=9cm, height=4cm]{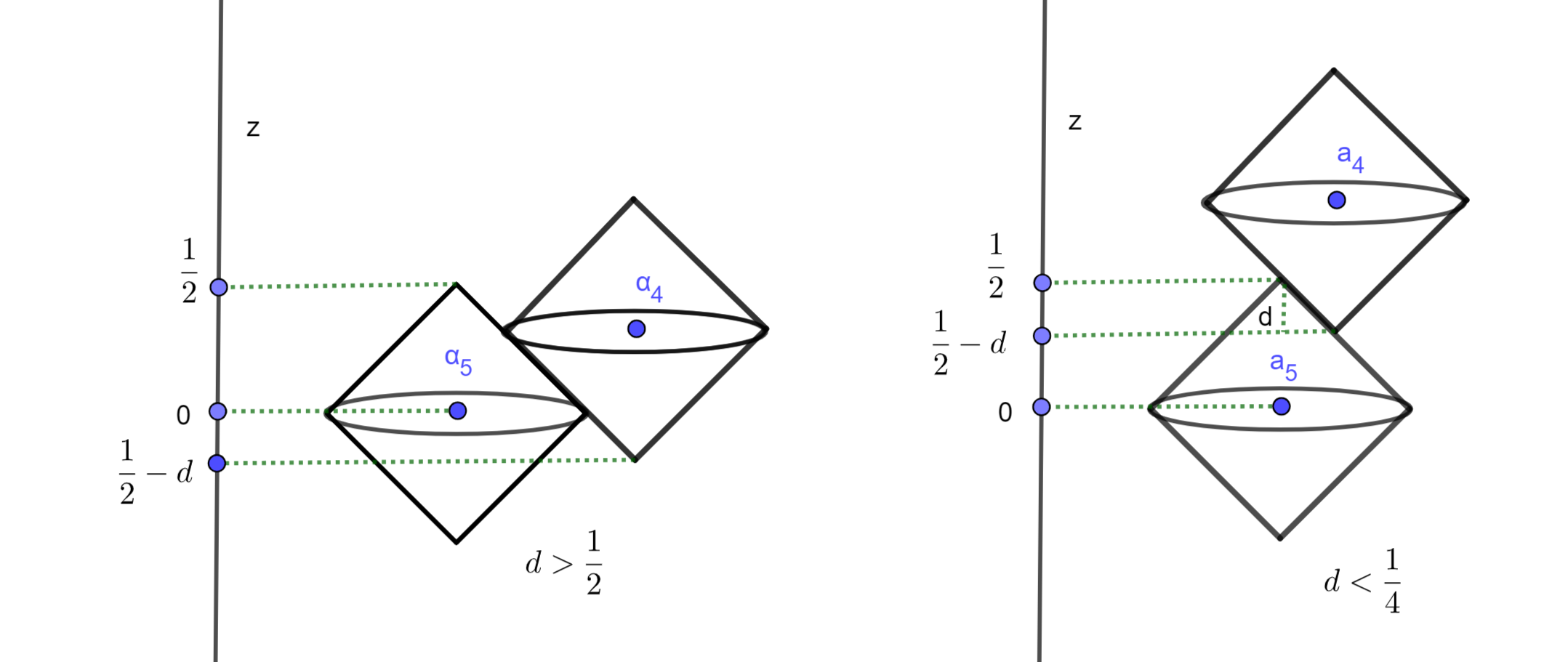}
\caption{}
\label{fig:figure 1}
\end{figure}

\begin{prop}
Let $A=\{\alpha_i:i=1,2,\dots,5\}$ be a 1-equilateral subset of $X$ 
with $\alpha_5=(0,0,0)$ and $\alpha_4=(0,d,1-d)$. Then necessarily $d<\frac{1}{4}$
and the remaining points $\alpha_i, i=1,2,3$ lie on a planar curve (ellipse).
Also $\frac{1}{2}-d \le \alpha'_i \le \frac{1}{2}, i=1,2,3$.
\end{prop}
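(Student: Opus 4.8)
The plan is to extract the geometry from the unit‑distance equations satisfied by each of $\alpha_1,\alpha_2,\alpha_3$: $\|\alpha_i-\alpha_5\|=1$, $\|\alpha_i-\alpha_4\|=1$, and $\|\alpha_i-\alpha_j\|=1$ for the remaining index $j$. Write $\alpha_i=(x_i,y_i,z_i)$, $r_i=\sqrt{x_i^2+y_i^2}$. The assumed ordering together with $\alpha_5=(0,0,0)$ forces $0\le z_i\le z_4=1-d$, so $\|\alpha_i-\alpha_5\|=1$ reads $r_i=1-z_i$, and $\|\alpha_i-\alpha_4\|=1$ reads $\sqrt{x_i^2+(y_i-d)^2}=d+z_i$ (its $z$‑part being $1-d-z_i\ge0$). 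Squaring the latter and substituting $x_i^2+y_i^2=(1-z_i)^2$ collapses it to the affine relation
\[ y_i=\frac{1-2(1+d)z_i}{2d},\qquad\text{i.e.}\qquad 2d\,y_i+2(1+d)z_i=1 , \]
so $\alpha_1,\alpha_2,\alpha_3$ lie in the fixed plane $P\colon 2dy+2(1+d)z=1$ (here $d>0$; for $d=0$ the $\alpha_i$ would lie on the horizontal circle $r=z=\tfrac12$, which carries no three pairwise unit‑distant points). Eliminating $z$ inside $P$, the sphere condition $x^2+y^2=(1-z)^2$ becomes a conic in $(x,y)$ with quadratic part $x^2+\tfrac{1+2d}{(1+d)^2}\,y^2$, which is positive definite; since the conic carries three distinct points it is a nondegenerate ellipse — in fact $\alpha=\bigl(\pm\sqrt{t(1-t)(1+2d)},\,-\tfrac12+t(1+d),\,\tfrac12-td\bigr)$, $t\in[0,1]$.

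The range of $\alpha_i'=z_i$ is forced by $x_i^2=(1-z_i)^2-y_i^2\ge0$, i.e. $|y_i|\le1-z_i$: substituting the value of $y_i$, the half $y_i\le1-z_i$ gives $z_i\ge\tfrac12-d$ and the half $-y_i\le1-z_i$ gives $z_i\le\tfrac12$. This yields $\tfrac12-d\le\alpha_i'\le\tfrac12$, and since $\tfrac12-d>0$ here it also recovers $z_i\ge0$. The remaining assertion, $d<\tfrac14$, is the crux, and it is where the mutual conditions $\|\alpha_i-\alpha_j\|=1$ come in.

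For those, apply on $P$ the affine substitution $v=\tfrac{\sqrt{1+2d}}{1+d}\bigl(y-\tfrac d2\bigr)$ (keeping $x$), which carries the ellipse to a circle $C$ of radius $a=\tfrac{\sqrt{1+2d}}2$; write $\alpha_i\mapsto a(\cos\psi_i,\sin\psi_i)$. Using that $z$ is affine in $y$ on $P$ (so $z_i-z_j$ is a fixed multiple of $y_i-y_j$) and that $\sin\psi_i=2t_i-1$, a direct computation turns $\|\alpha_i-\alpha_j\|=1$ into
\[ (1+2d)\sin^2\tfrac{\psi_i-\psi_j}2+2d\,\bigl|\cos\tfrac{\psi_i+\psi_j}2\bigr|\,\bigl|\sin\tfrac{\psi_i-\psi_j}2\bigr|=1 . \]
Bounding $\bigl|\cos\tfrac{\psi_i+\psi_j}2\bigr|$ between $0$ and $1$ gives $\tfrac1{1+2d}\le\bigl|\sin\tfrac{\psi_i-\psi_j}2\bigr|\le\tfrac1{\sqrt{1+2d}}$, hence every pairwise angular separation $s_{ij}$ on $C$ satisfies $2\arcsin\tfrac1{1+2d}\le s_{ij}\le 2\arcsin\tfrac1{\sqrt{1+2d}}$. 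Now split on whether $\alpha_1,\alpha_2,\alpha_3$ lie in a common half‑circle of $C$. If they do, the largest $s_{ij}$ is the sum of the other two, forcing $2\arcsin\tfrac1{\sqrt{1+2d}}\ge4\arcsin\tfrac1{1+2d}$, which fails for every $0<d\le1$ (recall $d\le1$ since $z_4=1-d\ge z_5=0$; e.g.\ at $d=1$ it would demand $\arcsin\tfrac1{\sqrt3}\ge2\arcsin\tfrac13$). If they do not, the three separations sum to exactly $2\pi$, so $2\pi\le 6\arcsin\tfrac1{\sqrt{1+2d}}$, i.e. $\arcsin\tfrac1{\sqrt{1+2d}}\ge\tfrac\pi3$, i.e. $1+2d\le\tfrac43$, i.e. $d\le\tfrac16<\tfrac14$. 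The hard part is exactly this last paragraph: the coordinate change and the verification of the displayed identity for $\|\alpha_i-\alpha_j\|=1$, plus running the ``half‑circle vs.\ not'' dichotomy carefully enough that the eight sign choices for $(x_1,x_2,x_3)$ are all absorbed into the placement of the $\psi_i$ on $C$; everything else is routine.
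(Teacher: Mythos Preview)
Your argument is correct, and for the key inequality $d<\tfrac14$ it is genuinely different from the paper's. The paper's proof is geometric: at $d=\tfrac14$ it checks that the four vertices of the ellipse form a rhombus of side length~$1$, partitions the ellipse into quadrants, and argues that for a fixed point $A$ the two points $B,C$ at distance~$1$ from $A$ land in non-adjacent quadrants and hence satisfy $\|B-C\|>1$; it then notes that the same picture only gets worse for $d\in(\tfrac14,\tfrac12]$, while $d>\tfrac12$ is handled by inspection. Your route---the affine change $v=\tfrac{\sqrt{1+2d}}{1+d}(y-\tfrac d2)$ to a circle $C$, the identity $(1+2d)\sin^2\tfrac{\psi_i-\psi_j}{2}+2d\,|\cos\tfrac{\psi_i+\psi_j}{2}|\,|\sin\tfrac{\psi_i-\psi_j}{2}|=1$, and the half-circle dichotomy---is more analytic and actually yields the sharper conclusion $d\le\tfrac16$, consistent with the later Remark in the paper that the true range is $d\in[d_2,d_1]$ with $d_1=\tfrac{\sqrt2-1}{4}\approx0.1036$. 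What the paper's approach buys is visual intuition and minimal computation; what yours buys is a quantitatively better bound and an argument that does not rely on inspecting particular values of $d$. Two small points worth stating explicitly in a write-up: first, the claim that $\arcsin\tfrac1{\sqrt{1+2d}}<2\arcsin\tfrac1{1+2d}$ on $(0,1]$ reduces (via $u=\tfrac1{1+2d}\in[\tfrac13,1)$) to $4u(1-u^2)>1$ on $[\tfrac13,\tfrac1{\sqrt2}]$, which is easy to verify directly; second, the ellipse/plane derivation and the $z$-range via $|y_i|\le1-z_i$ match the paper's equations~(8)--(9) exactly, so that part is essentially the same.
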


\begin{proof}
When $d>\frac{1}{2}$ it is obvious that there are at most 4 cones with radius $\frac{1}{2}$
touching each other (equivalently a maximal 1-equilateral set of 4 points), see Fig.1.

Let $0<d \le \frac{1}{2}$. The fact that $\frac{1}{2}-d \le \alpha'_i \le \frac{1}{2}, i=1,2,3$
is easy to check, because each cone with center $\alpha_i, i=1,2,3$ must touch both of the other cones.

Consider a point $\alpha=(x,y,z) \in X$ with $||\alpha-\alpha_4||=||\alpha-\alpha_5||=1$.
The following equations hold (note that $0 \le \frac{1}{2}-d \le z \le \frac{1}{2}$):

\begin{equation}
\left.
\begin{aligned}
\sqrt{x^2+y^2}+z=1\\
\sqrt{x^2+(y-d)^2}+(1-d)-z=1
\end{aligned}
\right \}
\end{equation}

and solving for $z$ we get

\begin{equation}
\left.
\begin{aligned}
z=ay+b\\
x^2=\frac{2d+1}{4(d+1)^2} [-4y^2+4dy+2d+1]
\end{aligned}
\right \}
\end{equation}

where $a=-\frac{d}{d+1}$ and $b=\frac{1}{2(1+d)}$. One can check that the graph of (9)
is an ellipse lying on the plane $z=ay+b$. The points $\alpha_i, i=1,2,3$ lie on this ellipse
and are the centers of double cones of radius $\frac{1}{2}$ touching each other and also
touching the cones with centers $\alpha_4,\alpha_5$. It is useful to have in mind the arrangement
of the points $\{\alpha_i,i=1,2,\dots,5\}$ in space.

When $d=\frac{1}{4}$, it is easy to check that the 4 vertices of the corresponding
ellipse $A_1=\left(0,-\frac{1}{2},\frac{1}{2} \right)$, 
$A_2=\left(-\sqrt{\frac{3}{8}},\frac{1}{8},\frac{3}{8} \right)$, 
$A_3=\left(0,\frac{3}{4},\frac{1}{4} \right)$ and
$A_4=\left(\sqrt{\frac{3}{8}},\frac{1}{8},\frac{3}{8} \right)$ form a rhombus of 
side length 1. The 4 vertices partition the ellipse into 4 quadrants. If we fix 
a point $A$ in a certain quadrant, there are exactly two points $B,C$ on the ellipse lying in 
quadrants neighbouring the initial one and which are at distance 1 from the first point 
(equivalently one fixes a cone whose center lies on the ellipse and sets two other cones moving 
their centers around the ellipse until they touch the fixed cone). The two points $B,C$
defined this way are at distance $>1$ because they do not belong to neighbouring quadrants (see also Fig. 2).

When $\frac{1}{4} < d \le \frac{1}{2}$ the vertices of the ellipse are now at distance $>1$ and as in the previous case
one cannot have a 1-equilateral triangle whose vertices lie on the corresponding ellipse (equivalently one cannot
have 3 cones with radii $\frac{1}{2}$, whose centers lie on the ellipse and so that they touch each other).
\end{proof}

\begin{figure}[htbp]
\centering
\includegraphics[width=7cm, height=4cm]{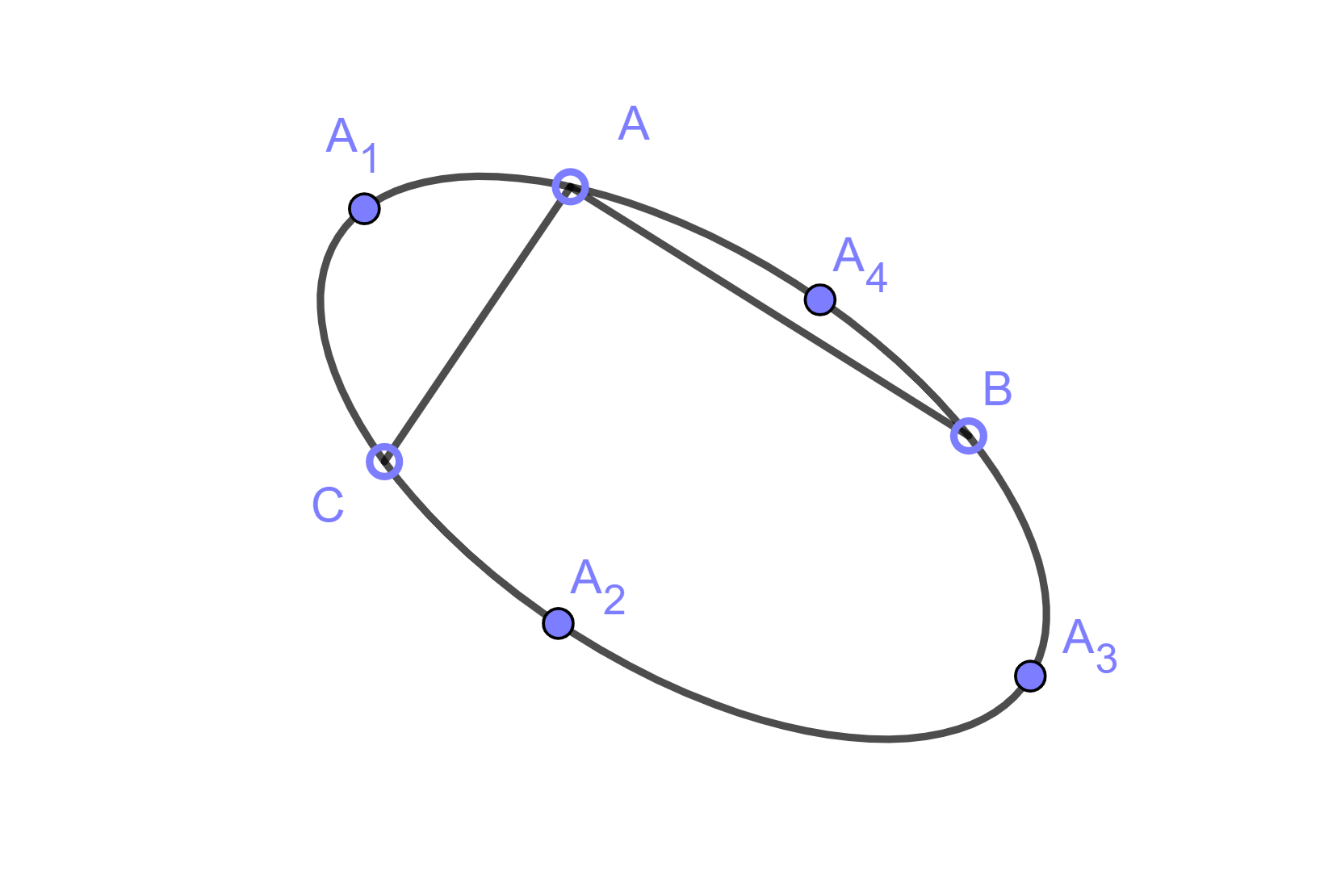}
\caption{}
\label{fig:figure 2}
\end{figure}

\begin{rem}
When the dimension of a space is $\ge 3$, Petty's Theorem (see [Pe]) states that fixing a 3-point equilateral set
we can always find a fourth point so that the 4 points form an equilateral set. In case when $\frac{1}{4} \le d \le 1$
Petty's Theorem and Proposition 1 ensure the existence of exactly 4 cones touching each other (equivalently of a maximal
equilateral set of 4 points).  
\end{rem}

We present two interesting examples of 4-point maximal equilateral sets in $X$, which
refer to the special case $d=1$.

\begin{examples}
\begin{enumerate}
\item Let $A_1=\left(0,\frac{1}{2},0\right)$, $A_2=\left(0,-\frac{1}{2},0\right)$, 
$A_3=\frac{1}{2}\left(1,0,2-\sqrt{2}\right)$, $A_4=\frac{1}{2}\left(-1,0,2-\sqrt{2}\right)$ 
and $K=\frac{1}{2}\left(0,0,1-\frac{\sqrt{2}}{2} \right)$. One can check that $S=\{A_i,i=1,2,3,4\}$
is a maximal 1-equilateral set with center $K$ and $\mu=||K-A_i||=1-\frac{\sqrt{2}}{4},i=1,2,3,4$.
Setting $r=\mu-\frac{1}{2}=\frac{1}{2}-\frac{\sqrt{2}}{4}$, it is easy to see that the ball $B(K,r)$
touches each one of the balls $B(A_i,\frac{1}{2}),i=1,2,3,4$ (which of course touch each other).
\item Let $A_1=\left(\frac{1}{2},0,0\right)$, $A_2=\left(-\frac{1}{2},0,0\right)$ and 
$A_3=\left(0,\frac{\sqrt{3}}{2},0\right)$. The triangle $A_1A_2A_3$ of the (euclidean) $xy$-plane
is 1-equilateral with barycenter $K=\left(0,\frac{\sqrt{3}}{6},0\right)$. The three vertices of the triangle
can be extended to a four-point 1-equilateral set, either by $D=\left(0,\frac{\sqrt{3}}{6},1-\frac{\sqrt{3}}{3}\right)$
or by $D'=-D$. It is easy to check that any point equidistant from the three vertices $A_1,A_2,A_3$ must belong
to the line $DK$. Hence the set $S=\{D,A_i,i=1,2,3\}$ is maximal 1-equilateral with no center.
\end{enumerate}
\end{examples}

\begin{theo}
Let $A=\{\alpha_i:i=1,2,\dots,5\} \subseteq X$ be a 1-equilateral set. We suppose that 
$\alpha_5=(0,0,0)$, $\alpha_4=(0,d,1-d)$, where $0<d<\frac{1}{4}$ and also that the 
$z$-coordinates satisfy $\alpha'_5 \le \alpha'_3 \le \alpha'_2 \le \alpha'_1 \le \alpha'_4$.
The set $A$ has no center, i.e. there is no point $\alpha_k \in X$ such that 
$||\alpha_k-\alpha_i||=r>0, i=1,2,\dots,5$.
\end{theo}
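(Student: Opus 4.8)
The plan is to argue by contradiction: assume a center $\alpha_k=(x_0,y_0,z_0)$ exists with $\|\alpha_k-\alpha_i\|=r$ for all $i$, and derive a contradiction with $0<d<\frac14$. First I would record what being equidistant from $\alpha_5=(0,0,0)$ and $\alpha_4=(0,d,1-d)$ forces on $\alpha_k$: this is exactly the system (8) with $r$ in place of $1$ on the right-hand sides, so the same elimination that produced (9) shows that $\alpha_k$ must lie on the plane $z=a'y+b'$ and on a scaled version of the ellipse, with $a',b'$ the analogues of $a=-\frac{d}{d+1}$, $b=\frac{1}{2(1+d)}$ now depending on $r$ as well as $d$. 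In particular $\alpha_k$ is pinned to a one-parameter family once $r$ is fixed, and $z_0$ is an affine function of $y_0$.

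Next I would bring in the three points $\alpha_1,\alpha_2,\alpha_3$, which by Proposition~1 lie on the genuine ellipse $E$ of (9) (the $r=1$ ellipse) in the plane $z=ay+b$, have $z$-coordinates in $[\frac12-d,\frac12]$, and are mutually at distance $1$. The center $\alpha_k$ must be at distance $r$ from each of these three as well. Here I would use the structure of the Petty norm (2): the distance between $\alpha_k$ and a point $\alpha_i=(r_i,\theta_i,z_i)$ splits as a planar (polar) part plus $|z_0-z_i|$. Because $z_0$ is an affine function of $y_0$ while the $z_i$ range over an interval of length $\le d<\frac14$, I expect that for a center to be equidistant from all three ellipse points it would have to sit in a very constrained position — essentially forcing the three $\alpha_i$ to be symmetric in a way the ellipse $E$ does not permit once $d<\frac14$. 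Concretely, I would show that equidistance from $\alpha_1,\alpha_2,\alpha_3$ combined with the planar geometry forces these three points to lie on a circle in the polar plane centered at the projection of $\alpha_k$, and then check that no such configuration of three points on $E$ can simultaneously be mutually $1$-equilateral when $d<\frac14$ — this is where the hypothesis $d<\frac14$ (and implicitly the rhombus/quadrant analysis of Proposition~1, which already shows the ellipse is "too small" to host an equilateral triangle cleanly) does the work.

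The main obstacle, and the step I would spend the most care on, is precisely this last incompatibility: turning "three mutually unit-distance points on the ellipse $E$, plus a common equidistant point $\alpha_k$" into an explicit contradiction. I would handle it by parametrizing $\alpha_1,\alpha_2,\alpha_3$ on $E$ (say by the polar angle, using (9) to express $x,z$ in terms of $y$), writing the three equations $\|\alpha_k-\alpha_i\|=r$ and the three equations $\|\alpha_i-\alpha_j\|=1$, and eliminating. The affine relation $z_0=a'y_0+b'$ and the fact that $a=-\frac{d}{d+1}$ is small for small $d$ should let me linearize enough to see that the only way out is a degeneracy (two of the $\alpha_i$ coinciding, or $d\ge\frac14$), contradicting either that $A$ has five distinct points or the hypothesis. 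An alternative, possibly cleaner route I would also try: a center of $A$ would in particular be a center of the $4$-point subset $\{\alpha_1,\alpha_2,\alpha_3,\alpha_4\}$; comparing with the rigidity observed in Proposition~1 (for $d<\frac14$ the ellipse $E$ genuinely carries a $1$-equilateral triangle, but only in an "off-center" way, analogous to Example~2(2) where the extension point $D$ lies off the line of centers) suggests that the unique candidate for a point equidistant from $\alpha_1,\alpha_2,\alpha_3$ lies on a fixed line, and then imposing equidistance from $\alpha_4$ and $\alpha_5$ as well overdetermines the system. Either way the computation is routine once set up; the conceptual content is that $d<\frac14$ forces the ellipse configuration to lack the symmetry a center would require.
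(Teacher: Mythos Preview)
Your setup is reasonable --- argue by contradiction, and observe that equidistance from $\alpha_4$ and $\alpha_5$ constrains $\alpha_k$ via the same mechanism as in Proposition~5. But the core step, which you yourself flag as the main obstacle, is not carried out, and the concrete tactic you propose for it is wrong. You claim equidistance of $\alpha_k$ from $\alpha_1,\alpha_2,\alpha_3$ would force their projections to lie on a common circle in the $xy$-plane centered at the projection of $\alpha_k$. It does not: from $\|\alpha_k-\alpha_i\|=r$ one only gets
\[
\sqrt{(x_i-x_0)^2+(y_i-y_0)^2}=r-|z_i-z_0|,
\]
and the right-hand side depends on $i$ because the $z_i$ are in general distinct (they spread over an interval of length up to $d$). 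So the projections lie on circles of \emph{different} radii, and no symmetry of the kind you need is forced. Your fallback --- parametrize everything, write out all six distance equations, eliminate --- might in principle succeed, but you have not done it, and the appeal to linearization ``since $a=-d/(d+1)$ is small'' is not persuasive: $d$ ranges up to nearly $\tfrac14$, so $|a|$ can be close to $\tfrac15$, and there is no reason to expect the system to behave perturbatively over this whole range. The alternative route via Example~2(2) is also only a heuristic; that example treats one specific $4$-point configuration and gives no mechanism for the general case.

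The paper's proof is completely different and sidesteps all of this algebra. It reinterprets a putative center $\alpha_k$ at distance $\mu>\tfrac12$ from the $\alpha_i$ as the center of a double-cone ball $B(\alpha_k,r)$, $r=\mu-\tfrac12$, tangent to each $B(\alpha_i,\tfrac12)$, and then \emph{slices the whole configuration by the horizontal plane} $z=\alpha'_k$. On that plane one sees five discs $(A_i,\rho_i)$ and a sixth disc $(\alpha_k,r)$ that must be tangent to all five. The hypothesis $d<\tfrac14$ is used only to check that the discs coming from $\alpha_4,\alpha_5$ are the two smallest. Two short planar lemmas (Lemmas~6 and~7: a disc tangent to a small disc cannot cross certain half-lines, hence is confined to one side of an angle) then partition the plane into angular regions, and one verifies case by case that the disc $(\alpha_k,r)$, being trapped in a single region, cannot touch all five others. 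The contradiction is pure planar incidence geometry; the ellipse $E$ and the parametrization of $\alpha_1,\alpha_2,\alpha_3$ never enter.
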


Before proceeding with the proof of the Theorem we need two Lemmas which are easy to prove.

\begin{figure}[htbp]
\centering
\includegraphics[width=6.5cm, height=5cm]{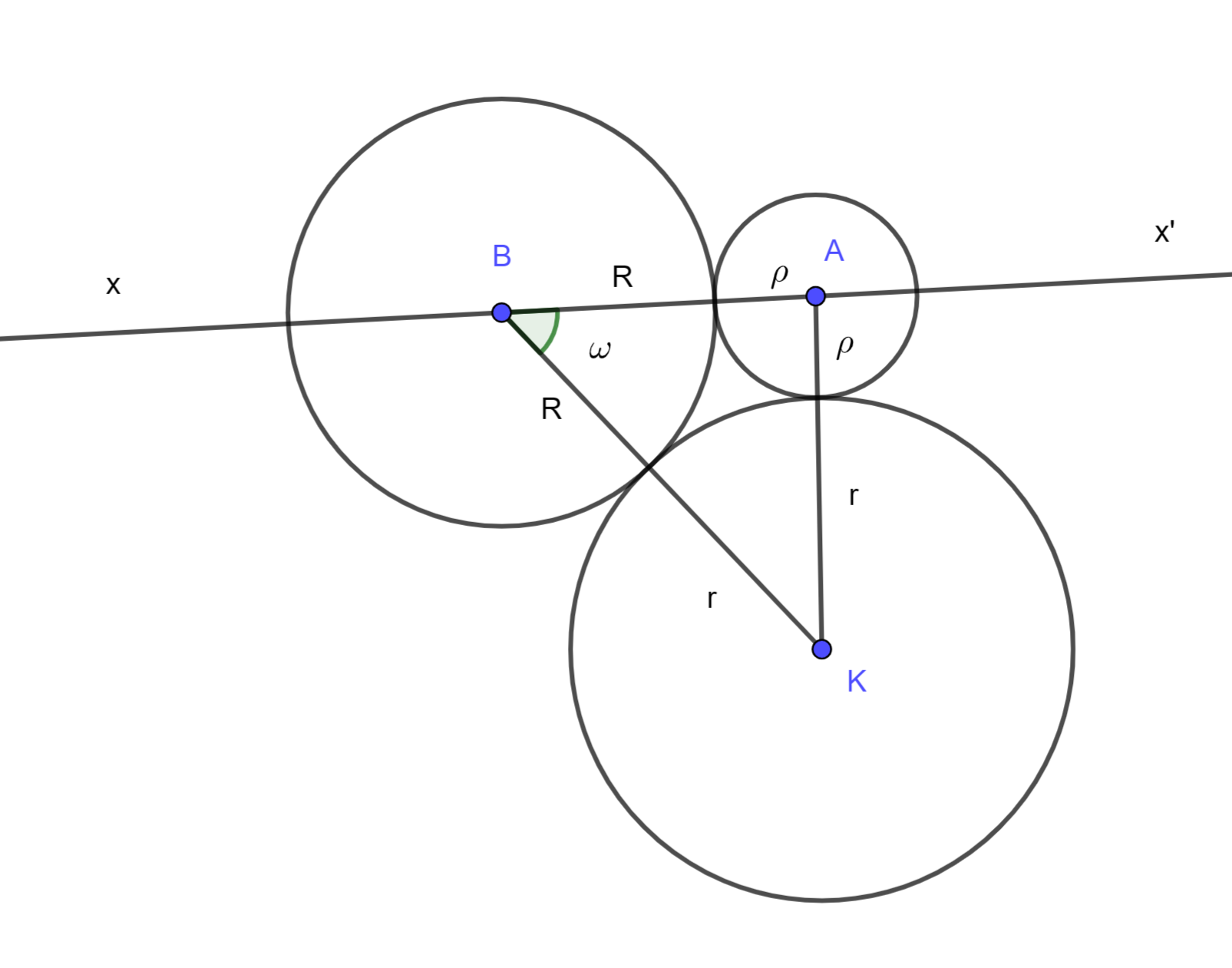}
\caption{}
\label{fig:figure 3}
\end{figure}

\begin{lemm}
Consider two tangent discs $(A,\rho)$ and $(B,R)$ on the plane with $R \ge \rho$. Let $(K,r)$
be another disc tangent to $(A,\rho)$, which may or may not touch $(B,R)$. Then $(K,r)$ has no
point in common with the half-line $ABx$.
\end{lemm}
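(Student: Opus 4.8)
The plan is to put the configuration in coordinates adapted to the tangency of $(A,\rho)$ with $(B,R)$ and to reduce the whole statement to one elementary estimate on the first coordinate of $K$.

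First I would normalise. Only the relative positions of the three discs matter, so place $A$ at the origin and $B$ on the positive $x$-axis. External tangency of $(A,\rho)$ and $(B,R)$ forces $B=(\rho+R,0)$, and then the half-line $ABx$ is precisely $\{(t,0):t\ge 0\}$. Reflecting in the $x$-axis if necessary (which fixes this half-line), I may assume $K=(x_K,y_K)$ with $y_K\ge 0$; external tangency of $(K,r)$ with $(A,\rho)$ means $x_K^2+y_K^2=(\rho+r)^2$.

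Next I would use the only constraint relating $K$ to $B$, namely that the disc $(K,r)$ does not overlap $(B,R)$ — this is the content of the parenthetical ``which may or may not touch $(B,R)$'', i.e. $|KB|\ge R+r$. Expanding $|KB|^2\ge (R+r)^2$ and substituting $x_K^2+y_K^2=(\rho+r)^2$, the second-degree terms cancel and one is left with a linear inequality in $x_K$ which reduces to
\[
x_K\ \le\ \rho-\frac{r(R-\rho)}{R+\rho}.
\]
Since $R\ge\rho$ and $r>0$, the subtracted term is non-negative, hence $x_K\le\rho$. This is the heart of the argument, and the only place where the hypotheses $|AB|=\rho+R$ and $R\ge\rho$ are really used. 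Finally I would compute the distance from $K$ to the half-line $\{(t,0):t\ge 0\}$ directly. If $x_K<0$ its nearest point is $A$ and the distance is $|KA|=\rho+r>r$. If $0\le x_K\le\rho$ its nearest point is $(x_K,0)$ and the distance is
\[
y_K\ =\ \sqrt{(\rho+r)^2-x_K^2}\ \ge\ \sqrt{(\rho+r)^2-\rho^2}\ =\ \sqrt{r^2+2\rho r}\ >\ r.
\]
In either case the distance from $K$ to $ABx$ strictly exceeds $r$, so every point of $ABx$ lies outside the closed disc $(K,r)$, which is the assertion.

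The only real obstacle is the bookkeeping in the middle step, together with recognising that some separation of $(K,r)$ from $(B,R)$ is genuinely needed: without it one could take $K=(\rho+r,0)$, and then $(K,r)$ would meet $ABx$ at the point $(\rho,0)$. Apart from that the lemma is, as the authors say, a one-line estimate once the picture is set up.
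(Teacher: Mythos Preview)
Your proof is correct and, in fact, more explicit than the paper's own argument. You set up coordinates, extract from the non-overlap condition $|KB|\ge R+r$ the sharp inequality $x_K\le \rho - r(R-\rho)/(R+\rho)\le \rho$, and then compute the distance from $K$ to the ray directly in the two cases $x_K<0$ and $0\le x_K\le\rho$. You also correctly identify that the phrase ``which may or may not touch $(B,R)$'' must be read as ``does not overlap $(B,R)$'', and you give the counterexample $K=(\rho+r,0)$ showing the lemma fails otherwise; this implicit hypothesis is indeed what the paper intends and uses.

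The paper's own proof is more geometric and considerably sketchier: it observes that the angle $\angle ABK$ is acute (which in your coordinates amounts only to $x_K<\rho+R$), hence the foot of the perpendicular from $K$ to the line lies on the $A$-side of $B$, and concludes by reference to the figure. That argument handles points of the ray beyond $B$ cleanly (their distance to $K$ is at least $|KB|\ge R+r>r$) but leaves the segment $[A,B]$ to the picture. Your quantitative bound $x_K\le\rho$ is exactly what is needed to treat the segment as well, so your route is the same idea carried out in full. Either approach works; yours has the advantage of being self-contained.
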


\begin{proof}
Suppose that $(K,r)$ touches both of the other discs (the remaining case is similar).
The angle $\omega=ABK$ is acute, so the distance from $K$ to the line $xx'$ is achieved
at a point of the half-line $Bx'$. In any case the point $K$ is at distance $>r$ from
any point of the half-line $ABx$ (see Fig.3).
\end{proof}

\begin{lemm}
Consider a disc $(A,\rho)$ on the plane touching two other discs $(B,R_1)$ and $(C,R_2)$
with disjoint interiors and such that $R_1,R_2 \ge \rho$. Let $(K,r)$ be another disc tangent
to $(A,\rho)$, which may or may not touch the other two discs. Then $(K,r)$ is either contained
in the interior of the convex angle $BAC$ or in the interior of the concave angle $BAC$.
\end{lemm}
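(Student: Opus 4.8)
The plan is to read the statement off from the preceding Lemma, applied once with $B$ and once with $C$. Since $R_1\ge\rho$, the hypotheses of the preceding Lemma are satisfied by $(A,\rho)$, $(B,R_1)$ and $(K,r)$, so $(K,r)$ has no point in common with the half-line emanating from $A$ and passing through $B$; and since also $R_2\ge\rho$, the same Lemma applied to $(A,\rho)$, $(C,R_2)$ and $(K,r)$ shows that $(K,r)$ has no point in common with the half-line emanating from $A$ and passing through $C$. Hence $(K,r)$ lies entirely in the complement of the union of these two rays.

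Next I would check that the two rays $AB$ and $AC$ are actually distinct. If they coincided, then, by the external tangency of $(B,R_1)$ and $(C,R_2)$ with $(A,\rho)$, the centres $B$ and $C$ would both lie on this common ray, at distances $\rho+R_1$ and $\rho+R_2$ from $A$ respectively; but then each of the discs $(B,R_1)$, $(C,R_2)$ would contain in its interior all points of the ray lying at distance slightly larger than $\rho$ from $A$, contradicting the hypothesis that $(B,R_1)$ and $(C,R_2)$ have disjoint interiors. So $AB$ and $AC$ are two distinct rays with common endpoint $A$.

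It then remains to invoke the elementary fact that the complement in the plane of the union of two distinct rays issuing from a point $A$ consists of exactly two connected open regions: the interior of the convex angle $BAC$ and the interior of the concave (reflex) angle $BAC$ when the two rays are not opposite, and the two open half-planes bounded by the line through $A$, $B$, $C$ when they are opposite (in which case both ``angles'' are straight, and one counts a straight angle as convex). Since $(K,r)$ is a connected set disjoint from the union of the two rays, it lies in exactly one of these regions, which is precisely the assertion.

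The proof is short because the preceding Lemma carries all the geometric weight; the only points that call for a little care are the non-degeneracy observation in the second paragraph (the disjoint-interiors hypothesis is exactly what prevents $AB$ and $AC$ from coinciding, so that the alternative ``convex or concave angle $BAC$'' is exhaustive), together with the bookkeeping for the limiting case in which $A$, $B$, $C$ are collinear with $A$ between $B$ and $C$. I do not expect any genuine obstacle beyond these.
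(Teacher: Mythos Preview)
Your proposal is correct and follows exactly the paper's approach: the paper's own proof is the single sentence ``The proof is immediate applying Lemma 6 (see Fig.4),'' i.e.\ apply the preceding lemma once for each of the discs $(B,R_1)$ and $(C,R_2)$. Your write-up simply spells out the connectedness step and the non-degeneracy check (that the rays $AB$ and $AC$ are distinct) that the paper leaves implicit.
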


\begin{figure}[htbp]
\centering
\includegraphics[width=6.5cm, height=4.2cm]{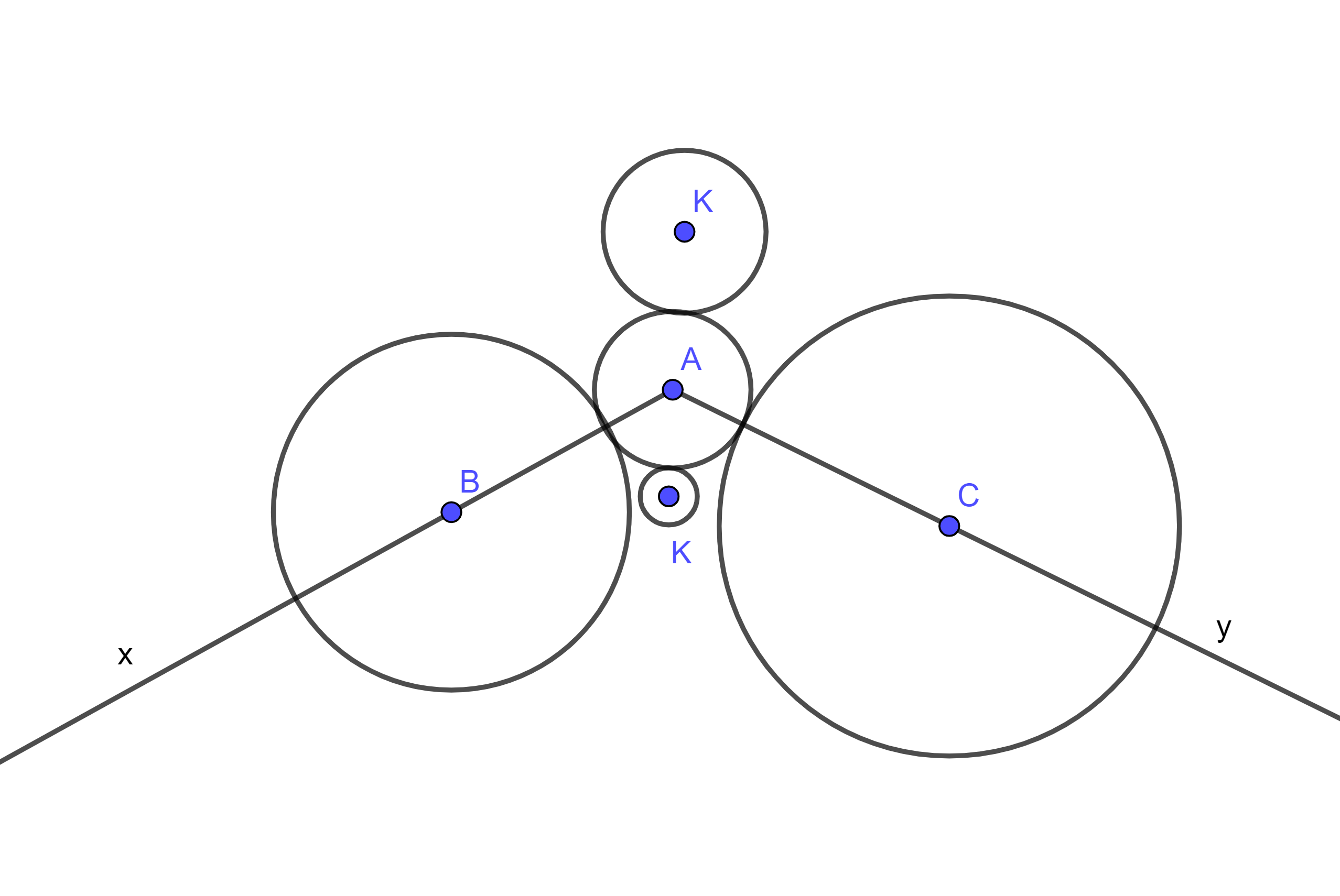}
\caption{}
\label{fig:figure 4}
\end{figure}

\begin{proof}
The proof is immediate applying Lemma 6 (see Fig.4).
\end{proof}

We also need the following

\begin{rem}
Consider two double cones $B(\alpha_4,\frac{1}{2})$ and $B(\alpha_5,\frac{1}{2})$
with $||\alpha_4-\alpha_5||=1$ and let $\alpha'_4$ and $\alpha'_5$ be the $z$-coordinates of the centers.

Suppose first that $\alpha'_4 \ge \alpha'_5$ with $\alpha'_4-\alpha'_5<\frac{1}{2}$ (see the left case of Fig.1).
The two cones touch along a segment (or have just one  point in common when $\alpha'_4=\alpha'_5$).
Intersecting the two cones with a plane $z=z_0$, $\alpha'_5 \le z_0 \le \alpha'_4$ gives two tangent discs,
while intersecting with a plane $z=z_0$, $\alpha'_4 -\frac{1}{2} \le z_0 <\alpha'_5$ gives two non-tangent discs
(or a disc and a point out of the disc).

On the other hand, in case when $\alpha'_4 \ge \alpha'_5$ with $\frac{1}{2} \le \alpha'_4-\alpha'_5<1$ (the situation 
is similar to the right case of Fig.1), intersecting with  a plane $z=z_0$, $\alpha'_4 -\frac{1}{2} \le z_0 \le \alpha'_5+ \frac{1}{2}$ gives two tangent discs (or a disc and a point on the boundary of that disc).
\end{rem}

\begin{proof} (of Theorem 1).
To obtain a contradiction, suppose that the 1-equilateral set $A$ has a center $\alpha_k \in X$ with 
$||\alpha_k-\alpha_i||=\mu>\frac{1}{2}, i=1,2,\dots,5$ and let $\alpha'_k$ be the $z$-coordinate of $\alpha_k$.
Equivalently there is a double cone with center $\alpha_k$ and radius $r=\mu-\frac{1}{2}>0$ touching each double cone 
ball $B(\alpha_i,\frac{1}{2}),i=1,2,\dots,5$.

The $z$-coordinate $\alpha'_k$ must satisfy the relation $\frac{1}{2}-d \le \alpha'_k \le \frac{1}{2}$,
since the cone $B(\alpha_k,r)$ touches both of the cones $B(\alpha_4,\frac{1}{2})$ and $B(\alpha_5,\frac{1}{2})$
(see also Prop.5).

When $\alpha'_k=\frac{1}{2}$ or $\alpha'_k=\frac{1}{2}-d$ (then $\alpha_k$ lies on the $yz$-plane) it is easy to check that
$B(\alpha_k,r)$ can only touch two out of the three balls $B(\alpha_i,\frac{1}{2}),i=1,2,3$ and due to symmetry these two balls
have centers with the same $z$-coordinate.

So we may suppose that 
\begin{equation}
\frac{1}{2}-d<\alpha'_k<\frac{1}{2}.
\end{equation}

Consider the plane $z=\alpha'_k$ whose intersection with each ball $B(\alpha_i,\frac{1}{2})$
is a closed disc $(A_i,\rho_i), i=1,2,\dots,5$ and its intersection with $B(\alpha_k,r)$ is the 
closed disc $(\alpha_k,r)$. Note that the discs $(A_4,\rho_4)$ and $(A_5,\rho_5)$ are tangent.
Also $(\alpha_k,r)$ touches each disc $(A_i,\rho_i),i=1,2,\dots,5$. Because of relation (10) 
and the fact that $d<\frac{1}{4}$ (see Prop.5), it is easy to check that $\rho_4,\rho_5$ 
are smaller than the radii $\rho_1,\rho_2,\rho_3$ (see also Rem.2).
In sum, on the plane $z=\alpha'_k$ we find two small discs $(A_4,\rho_4)$ and $(A_5,\rho_5)$
touching each other, three larger discs $(A_i,\rho_i),i=1,2,3$ and the disc $(\alpha_k,r)$
which touches any other disc. 

Regarding the arrangement of the discs on the plane there are two essential cases, depending
on where $\alpha'_k$ lies with respect to the $\alpha'_i,i=1,2,3$. The reader should take into
account Remark 2.

\begin{figure}[htbp]
\centering
\includegraphics[width=6cm, height=4.5cm]{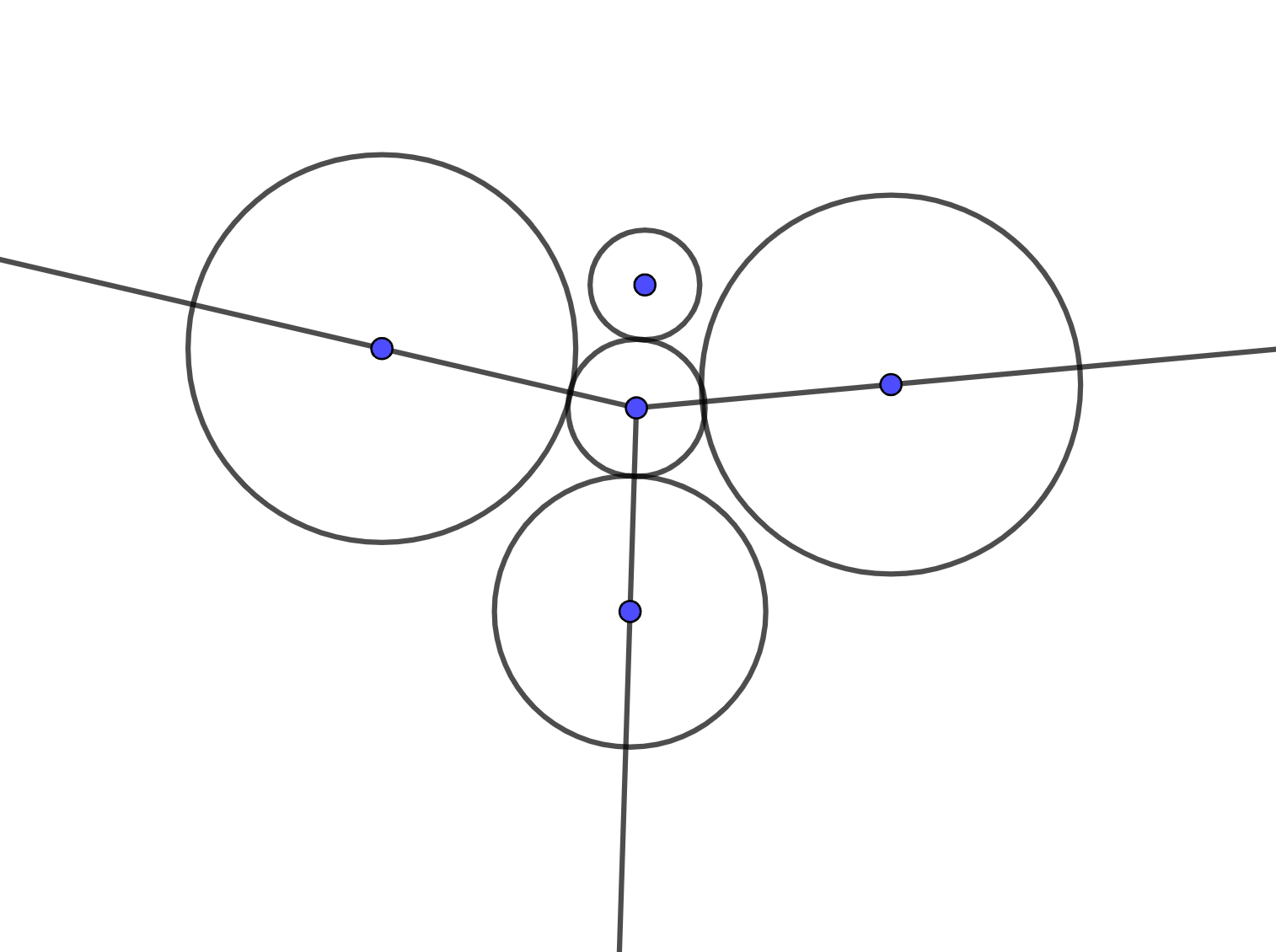}
\caption{}
\label{fig:figure 5}
\end{figure}

\noindent \textbf{Case 1:} $\alpha'_1<\alpha'_k<\frac{1}{2}$ or $\frac{1}{2}-d<\alpha'_k<\alpha'_3$.
On the plane $z=\alpha'_k$ we see the three pairwise disjoint discs $(A_i,\rho_i), i=1,2,3$ touching
the same one of the small discs. The two small discs also touch (see Fig.5).

We can now apply Lemma 7. The plane is divided into 3 regions, which are
the interiors of 3 angles generated by a small disc touching two larger discs.
The disc $(\alpha_k,r)$ must lie entirely in one of these regions. In any case 
it is impossible that $(\alpha_k,r)$ touches all of the other discs.

\noindent \textbf{Case 2:} $\alpha'_3<\alpha'_k<\alpha'_1$ and $\alpha'_k \neq \alpha'_2$.

On the plane $z=\alpha'_k$ we see two of the discs $(A_i,\rho_i), i=1,2,3$ touching
the third one and also touching the same one of the small discs. The third disc touches the other
one of the small discs and the two small discs also touch (see Fig.6).

For instance, we examine the (special) case when $\alpha'_3<\alpha'_k<\alpha'_2$ (the other case is similar). 
The plane $z=\alpha'_k$ intersects the upper half of the ball $B(\alpha_3,\frac{1}{2})$ and the lower half of the balls 
$B(\alpha_1,\frac{1}{2})$ and $B(\alpha_2,\frac{1}{2})$. The disc $(A_3,\rho_3)$ touches both discs $(A_1,\rho_1)$
and $(A_2,\rho_2)$ which are disjoint. The upper half of the ball $B(\alpha_3,\frac{1}{2})$ touches the lower half 
of $B(\alpha_4,\frac{1}{2})$, hence the disc $(A_3,\rho_3)$ touches the disc $(A_4,\rho_4)$.
On the other hand, the lower half of $B(\alpha_1,\frac{1}{2})$ and of $B(\alpha_2,\frac{1}{2})$ touch the upper
half of $B(\alpha_5,\frac{1}{2})$, hence the discs $(A_1,\rho_1)$ and $(A_2,\rho_2)$ both touch $(A_5,\rho_5)$.
Clearly, the discs $(A_4,\rho_4)$ and $(A_5,\rho_5)$ are also tangent.

\begin{figure}[bhtp]
\centering
\includegraphics[width=9.2cm, height=5cm]{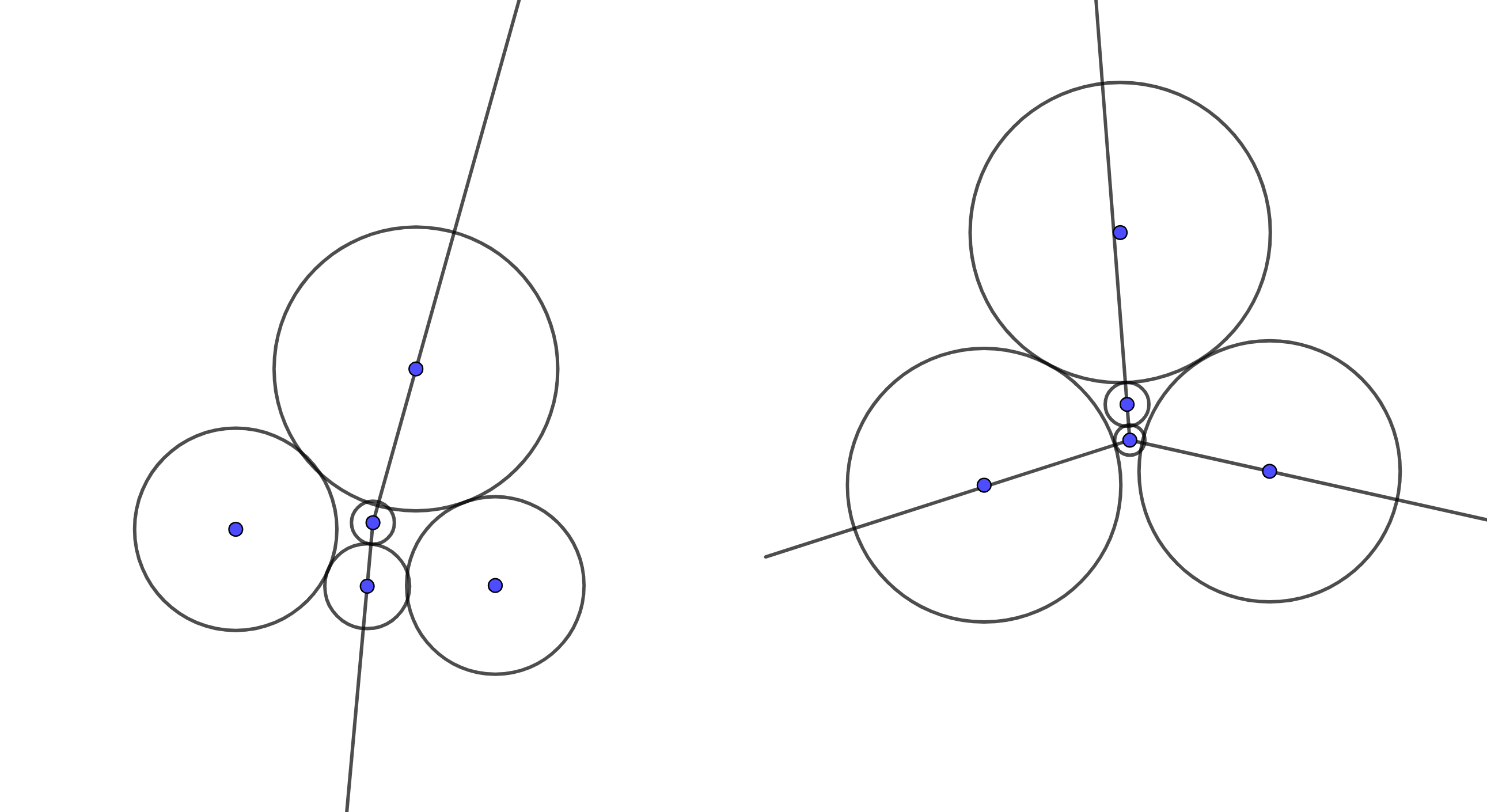}
\caption{}
\label{fig:figure 6}
\end{figure}

No matter which one of the small discs has larger radius with respect to the other, we can always
divide the plane into regions which are the interiors of angles generated by a small disc touching
two larger discs and apply Lemma 7. The disc $(\alpha_k,r)$ touches all of the other discs, yet
must lie entirely in exactly one of these regions, so this is impossible.

The remaining cases are $\alpha'_k=\alpha'_i$ for some $i=1,2,3$. Note that at most two out of
the three  $\alpha'_i,i=1,2,3$ can be equal, because of the arrangement of these points
on the corresponding ellipse (see Prop.5). When $\alpha'_k=\alpha'_1$ or $\alpha'_k=\alpha'_3$,
intersecting with $z=\alpha'_k$ we find an image similar to the one of Fig.5, except from the fact that
the disc $(A_1,\frac{1}{2})$ (or the disc $(A_3,\frac{1}{2})$) now touches all the other discs (see also Rem.2).

When $\alpha'_k=\alpha'_2$, intersecting with $z=\alpha'_k$ we find an image similar to the one of Fig.6, 
except from the fact that the disc $(A_2,\frac{1}{2})$ now touches all the other discs (see also Rem.2).

In case when $\alpha'_k=\alpha'_1=\alpha'_2$, intersecting with $z=\alpha'_k$ we find that
both of the discs $(A_1,\frac{1}{2})$ and $(A_2,\frac{1}{2})$ touche all the other discs and the case
$\alpha'_k=\alpha'_2=\alpha'_3$ is similar.

In any case, applying Lemma 7, we conclude that it is impossible to place a disc $(\alpha_k,r)$ so as to touche
each of the other disks, hence it is impossible for the set $A$ to have a center.

\end{proof}

\begin{rem}
As part of our study we investigated the configurations of 5-point equilateral sets in $X$.
The general form of such a set is the one we stated in Proposition 5 where $\alpha_5=(0,0,0)$,
$\alpha_4=(0,d,1-d)$, $d<\frac{1}{4}$, and the 3 remaining points lie on an ellipse (see Figure 2) and form an 
equilateral triangle. A natural question is which values of $d$ make it possible for an equilateral triangle on the 
corresponding ellipse to exist (and hence yield a 5-point equilateral set).

There are two extreme cases. The first one is when a vertice of the equilateral triangle coincides with one of the vertices
$A_1$ or $A_3$ of the corresponding ellipse (as in Figure 2) and the other two vertices of the triangle have the same $z$-coordinate
(due to symmetry). It is easy to check that in this case $d=d_1=\frac{\sqrt{2}-1}{4}$.

In the second case a vertice of the equilateral triangle coincides with one of the vertices
$A_2$ or $A_4$ of the ellipse (as in Figure 2) and in this case a calculation shows that $d$ is the single root
of the polynomial equation $16d^3+24d^2+8d-1=0$, 
i.e. $d=d_2=- \frac{1}{2}+\frac{1}{12} \{\sqrt[3]{54-6 \sqrt{33}}+\sqrt[3]{54+6 \sqrt{33}}\} \approx 0.09574$.

For any $d \in (d_2,d_1)$, an intermediate value argument ensures the existence of an equilateral triangle on the 
corresponding ellipse (in fact there are exactly four such triangles due to symmetry). Fix $d \in (d_2,d_1)$; then 
the isosceles triangle with vertex $A_1$ has base length $>1$, while the isosceles triangle with vertex $A_2$ has 
base length $<1$. As the vertex of an isosceles triangle moves on the arc $A_1 A_2$, the base length changes continuously,
so there is a point between $A_1$ and $A_2$ where the triangle becomes equilateral.

One can show that the values of $d$ for which there is an equilateral triangle on the ellipse (and hence a 
5-point equilateral set) are exactly those when $d \in [d_2,d_1]$ but our proof is too long to expose in this paper.
\end{rem}

%\section*{Declarations}

\scriptsize

\noindent S.K.Mercourakis, G.Vassiliadis\\
University of Athens\\
Department of Mathematics\\
15784 Athens, Greece\\
e-mail: smercour@math.uoa.gr\\
ORCID ID 0000-0001-9867-1889\\

\noindent georgevassil@hotmail.com


\begin{thebibliography}{99}

\bibitem{Be} K. Bezdek, Classical Topics in Discrete Geometry, CMS Books in Mathematics, Canadian Mathematical Society, Springer (2010).  

\bibitem{Bo} K. B\"{o}r\"{o}czky, Jr., Finite packing and covering, Cambridge Tracts in Mathematics, \textbf{154}, Cambridge
University Press, Cambridge, (2004).

\bibitem{BMP} P. Brass, W. O. J. Moser, and J. Pach, Research Problems in Discrete Geometry, Springer–Verlag, New
York, (2005).

\bibitem{L} Joseph M. Ling, On the size of equilateral sets in spaces with the double-cone norm, manuscript, (2006).

\bibitem{MV} S.K. Mercourakis and G. Vassiliadis, Antipodal Hadwiger 
numbers of finite-dimensional Banach spaces , Beitr\"{a}ge zur Algebra und Geometrie  \textbf{62} (2021), 717--735.

\bibitem{P} C.M. Petty, Equilateral sets in Minkowski spaces,
Proc. Amer. Math. Soc. \textbf{29} (1971), 369--374.

\bibitem{S} K.J. Swanepoel, Equilateral sets in finite-dimensional normed
spaces, Seminar of Mathematical Analysis, vol. 71, Univ. Sevilla Secr. Publ. (2004), 195--237.

\bibitem{SWAN} K.J. Swanepoel, Combinatorial distance geometry in normed spaces, 
New Trends in Intuitive Geometry, BSMS \textbf{27} Springer (2018), 407--458. 

%\bibitem{FHHMZ} M. Fabian, P. Habala, P. H\'{a}jek, V. Montesinos
%and V. Zizler, Banach Space Theory, The Basis for Linear and Nonlinear Analysis,
%CMS Books in Mathematics, Canadian Mathematical Society, Springer (2011).






\end{thebibliography}
\end{document}